\DeclareSymbolFont{largesymbolsstix}{LS2}{stixex}{m}{n}
\DeclareMathDelimiter{\lbrbrak}{\mathopen}{largesymbolsstix}{"EE}{largesymbolsstix}{"14}
\DeclareMathDelimiter{\rbrbrak}{\mathclose}{largesymbolsstix}{"EF}{largesymbolsstix}{"15}
\newtheorem{lemma}{Lemma}[section]
\newtheorem{prop}[lemma]{Proposition}
\newtheorem{corollary}[lemma]{Corollary}
\newtheorem{theorem}[lemma]{Theorem}
\newtheorem{thmm}{Theorem}
\newtheorem{thmk}{Theorem}
\theoremstyle{definition}
\newtheorem{Def}[lemma]{Definition}
\newtheorem*{notation}{Notation}
\theoremstyle{remark}
\newtheorem{example}[lemma]{Example}
\newenvironment{rules}
{\begin{list}{\(\cdot\)}{
\itemsep=0em
\leftmargin=5mm
\labelwidth=3mm
\labelsep=3mm
}}{\end{list}}
\def\RR{\mathbb{R}}
\def\ZZ{\mathbb{Z}}
\def\NN{\mathbb{N}}
\def\M{\mathcal{M}}
\def\root{\diamondsuit}
\let\temp\epsilon
\let\epsilon\varepsilon
\let\varepsilon\temp
\title{Closure Properties in the Class of Multiple Context-Free Groups.}
\author{Robert P. Kropholler and Davide Spriano}
\begin{document}
	
	\maketitle

\begin{abstract}
	We show that the class of groups with $k$-multiple context-free word problem is closed under graphs of groups with finite edge groups. 
\end{abstract}
\section{Introduction}

Multiple context-free languages (MCFLs) form a class of languages which contains context-free languages and is contained in context sensitive languages. MCFLs were introduced to better model natural languages for which it has been shown that context-free languages did not allow enough expressibility \cite{pollard_generalized_1984}. MCFLs allow some cross serial dependencies in natural languages such as Swiss German, for nice examples see \cite{salvati_notes}. They share several properties with context-free languages. Indeed, they form a cone of languages, they are semilinear, they are not closed under intersection and they satisfy some form of pumping lemma \cite{seki_multiple_1991}. MCFLs also have some useful decidability properties, for instance, one can decide membership in polynomial time \cite{seki_multiple_1991}. 

Given a presentation for a group \(G\), it is a natural question to ask whether two words represents the same element in \(G\). Using the elementary fact that \(g=h \leftrightarrow gh^{-1} = 1\), this is equivalent to establishing whether a given product of generators represents the identity element. 
One of the most successful strategies for tackling this question is to consider the set of all words that represents the trivial element, the so-called \emph{word problem}, and study it via language theoretical instruments. 
A remarkable result of Muller-Schupp \cite{muller_context-free_1981}, which relies on results of Stallings and Dunwoody \cite{stallings_group_1971, dunwoody_accessibility_1985} shows that the class of groups that have context-free word problem coincides with the class of virtually free groups.

With a complete classification of groups whose word problem is context-free, it is natural to look at larger classes. We will be interested in the class of multiple context-free languages (MCFLs), we will give a rigorous definition of this class in due course. The class was first studied in \cite{seki_multiple_1991}. The class of MCFLs is strictly larger than the class of CF languages. For example the language $\{a^nb^nc^n\mid n\in \NN\}$ is MCF but not CF. It was not until \cite{salvati_mix_2015} that it was known that the difference could be seen on the level of groups. Namely, \cite{salvati_mix_2015} shows that the word problem for $\ZZ^2$ is multiple context-free. However, since $\ZZ^2$ is not a virtually free group the word problem is not context-free. This result has been extended by Ho \cite{ho_word_2017_other} where it is shown that all free abelian groups have multiple context-free word problem. 

It is natural then to ask what are the closure properties of this class. It is shown in \cite{seki_multiple_1991}, that the class is closed under finite extensions and taking finitely generated subgroups. It is shown in \cite{gilman_groups_2017}, that the class is not closed under direct products.

In this paper we prove the following result:
\begin{thmk}\label{thm:graphofgroups}
	Let $G$ be the fundamental group of a finite graph of groups. Assume that all the vertex groups have multiple context-free word problem and all the edges groups are finite. Then $G$ has multiple context-free word problem. 
\end{thmk}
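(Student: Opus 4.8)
I would first strip the statement down to its two classical building blocks. Given $G=\pi_1(\mathcal Y)$ with $\mathcal Y$ a finite graph of groups with finite edge groups and multiple context-free vertex groups, induct on the number of edges of $\mathcal Y$. If the underlying graph is not a tree, choose a non-separating edge $e$; then $G=H\ast_\theta$ is an HNN extension of $H:=\pi_1(\mathcal Y\smallsetminus e)$ along the finite edge group $G_e$. Otherwise the underlying graph is a tree with at least one edge, and peeling off a leaf vertex $v$ together with its incident edge $e$ writes $G=H\ast_{G_e}G_v$ with $H:=\pi_1(\mathcal Y\smallsetminus\{v,e\})$. In both cases $H$ is the fundamental group of a finite graph of groups with strictly fewer edges, still with finite edge groups and multiple context-free vertex groups, so by induction $H$ has multiple context-free word problem; moreover $G_e$ embeds into $H$ since vertex groups embed into fundamental groups of graphs of groups. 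Hence it suffices to prove: if $A,B$ have $k$-multiple context-free word problem and $C$ is a common finite subgroup, then $A\ast_C B$ does (and likewise for the HNN extension $A\ast_\theta$ over a finite $C\le A$). I describe the amalgamated-product case; the HNN case is entirely parallel, using Britton's lemma in place of the amalgam normal-form theorem.

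\textbf{The normal-form lemma.}
Fix finite generating sets $S_A,S_B$, put $\Sigma=S_A\sqcup S_B$, and let $W_A\subseteq S_A^*$, $W_B\subseteq S_B^*$ be the ($k$-MCF) word problems. For $c$ in the finite group $C$ write $W_X^{(c)}=\{u\in S_X^*:u=_X c\}$ for $X\in\{A,B\}$; these are again $k$-MCF, by closure of $k$-MCF languages under homomorphism and intersection with regular sets. For $c\in C$ let $\mathrm{Bal}_c=\{z\in\Sigma^*:z=_G c\}$, so that $W_G=\mathrm{Bal}_1$. The technical heart of the proof is a Bass--Serre/Britton-type description: a nonempty word $z$ lies in $\mathrm{Bal}_c$ \emph{if and only if} there are a side $X\in\{A,B\}$, letters $a_1,\dots,a_\ell\in S_X$ with $\ell\ge 1$, elements $c_0,\dots,c_\ell\in C$, and words $x_0,\dots,x_\ell$ with $x_i\in\mathrm{Bal}_{c_i}$, such that $z=x_0a_1x_1a_2\cdots a_\ell x_\ell$ and the word $v_{c_0}a_1v_{c_1}a_2\cdots a_\ell v_{c_\ell}$ lies in $W_X^{(c)}$, where $v_c$ is a fixed $S_X$-word representing $c$; one checks in addition that the $x_i$ can be taken strictly shorter than $z$, so that the characterization is genuinely recursive. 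Informally, $z$ is assembled from a single-vertex-group word evaluating to $c$ by blowing up each of its letter-gaps with a sub-word evaluating into $C$, and the \emph{only} place finiteness of $C$ is used is that these ``transported'' elements range over a finite set. The ``if'' direction is a one-line computation inside $A$ (or $B$); the ``only if'' direction is where the amalgam normal-form theorem is invoked, to show that in the block decomposition of any word trivial-up-to-$C$ every block on the minority side already evaluates into $C$.

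\textbf{Compiling the grammar.}
I would then turn this recursion into a $k$-MCFG. Introduce one fresh symbol $\gamma_c$ per $c\in C$ and let $\widehat W_X^{(c)}$ be the set of words over $S_X\cup\{\gamma_c:c\in C\}$ evaluating to $c$ in the vertex group when each $\gamma_c$ is read as $c$; this is the inverse-homomorphic image of $W_X^{(c)}$ under $\gamma_c\mapsto v_c$, hence $k$-MCF. The lemma then says precisely that the finite family $(\mathrm{Bal}_c)_{c\in C}$ is the least solution of the system $\mathrm{Bal}_c=[\,\varepsilon\text{ if }c=1\,]\ \cup\ \bigcup_{X\in\{A,B\}}\bigl(\widehat W_X^{(c)}\text{ with each }\gamma_{c'}\text{ substituted by }\mathrm{Bal}_{c'}\bigr)$. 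Since $k$-MCF languages are closed under union, intersection with regular sets, (inverse) homomorphism and substitution of $k$-MCF languages, and since substituting a one-component language into a terminal slot does not raise the fan-out, one may take a disjoint union of fixed $k$-MCFGs for the finitely many $\widehat W_X^{(c)}$ and adjoin productions realizing the substitutions $\gamma_{c'}\rightsquigarrow\mathrm{Bal}_{c'}$; the resulting single $k$-MCFG has least-fixed-point language exactly $\mathrm{Bal}_1=W_G$. Finiteness of $C$ is exactly what makes this system of equations finite.

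\textbf{Main obstacle.}
The delicate step is the ``only if'' half of the normal-form lemma: one must show that \emph{every} word trivial in $G$ can be disassembled into the prescribed interspersed form with ever-shorter fillers. This is the point at which the amalgam normal-form theorem (respectively Britton's lemma) does the real work, and where the bookkeeping with the finite edge group $C$ --- tracking which element of $C$ each sub-chunk ``leaves behind'' and verifying that these compose correctly across blocks --- has to be made airtight; a clean statement and proof of this lemma is essentially the theorem. A secondary point to be careful about, needed for the sharp statement that the class of groups with $k$-multiple context-free word problem (and not merely the union over all $k$) is closed, is that the grammar must be built only from closure operations that preserve the fan-out $k$; the construction above is routed through inverse homomorphism and substitution precisely so that this holds, rather than through an ad hoc product of grammars.
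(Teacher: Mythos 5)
Your reduction of the graph-of-groups statement to amalgams and HNN extensions over finite edge groups is the same as the paper's (spanning tree plus one HNN extension per remaining edge), but your treatment of the key step is genuinely different. The paper works entirely on the automaton side: using Denkinger's characterisation of $k$-MCFLs by $k$-restricted tree-stack automata, it glues the two vertex-group automata along fresh \texttt{push}/\texttt{down} transitions labelled by pairs (state, coset word), proves a preliminary proposition that $\{w : w \text{ represents an element of } H\}$ is MCF for $H$ finite by a ``guess $h$ and prepend a word for $h^{-1}$'' modification of the automaton, and then verifies correctness by colouring the final tree-stack and inducting on the number of monochromatic subtrees (one direction) and on free-product length (the other). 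You instead stay on the grammar side: you package the group theory into a single recursive characterisation of the languages $\mathrm{Bal}_c$, $c\in C$, and realise the resulting finite system of language equations as a $k$-MCFG via closure under union, inverse homomorphism and substitution of fan-out-one nonterminals. Both routes invoke the same normal-form theorem for amalgams (resp.\ Britton's lemma) and use finiteness of $C$ only to keep the index set finite. What your route buys is modularity and a cleaner preservation of the parameter $k$ (your observation that only fan-out-one slots are substituted is exactly the right one); what it costs is that your normal-form lemma is strictly stronger than what the paper needs --- the paper only extracts \emph{one} block evaluating into $C$, replaces it by a fixed word on the other side, and recurses, which is an easier statement to prove, whereas your ``one-shot'' decomposition into a spine over a single factor with strictly shorter fillers requires iterating the normal-form theorem and checking that the surviving spine is nonempty and that every filler omits at least one spine letter. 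That lemma is true, and you have correctly identified it as the crux, but as written it is an assertion rather than a proof. Two smaller points: $W_X^{(c)}$ is not obtained from $W_X$ by ``homomorphism and intersection with a regular set'' --- the right operation is a quotient by the single word $v_{c^{-1}}$ (equivalently the paper's guess-and-prepend construction), which $k$-MCFLs do admit; and the HNN case, while parallel, has a genuinely different top-level combinatorics (a Dyck-like matching of stable letters with pinches evaluating into the associated subgroups), so ``entirely parallel'' deserves at least the statement of the corresponding lemma, as the paper gives.
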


Since the class groups with regular word problem coincides with the class of finite groups, one could rephrase this result as saying that the class of MCF groups is closed under amalgamation over regular groups. This result is not true substituting regular groups with CF groups. Indeed, $F_2\times F_2 = (\ZZ^2\ast_{\ZZ}\ZZ^2)\ast_{F_2} (\ZZ^2\ast_{\ZZ}\ZZ^2)$ and does not have multiple context-free word problem. 

\section*{Acknowledgements} 
We greatly thank Bob Gilman for introducing us to the subject and making this project possible. This work was started at MSRI, Berkeley, where research is supported by the National Science Foundation under Grant No. DMS-1440140. The second author would like to thank UC Berkeley for inviting him as a visiting scholar. The first author would like to thank Alessandro Sisto for inviting me to complete this work at the ETH. We thank the anonymous referee for helpful comments and suggestions, in particular the addition of Section 6. Finally, we would like to thank Neil Fullarton for his invaluable work with a stapler.

\section{Background}

We are interested in the study of formal languages. In this section, we will give an introduction to formal languages and MCFLs. For a more comprehensive treatment, we refer to \cite{hopcroft_formal_1969}.
\begin{Def}
Given a finite set \(\Sigma\), \(\Sigma^*\) is the {\em free monoid over \(S\)}, i.e. the set of all finite words in \(\Sigma\) with the concatenation operation. We will denote with \(\epsilon\) the trivial element of \(\Sigma^*\), namely the empty word.
\end{Def}

\begin{Def}
Given a finite set \(\Sigma\), we say that a set \(L \subseteq \Sigma^*\) is a \emph{language} over \(\Sigma\). 
\end{Def}

Since the definition of language is very broad, we will restrict our attention to languages that have a nice description. The reader should think of this as the same meta-distinction between continuous functions \(\RR \rightarrow \RR\) and continuous functions that can be phrased in terms of elementary functions. 

Hence we want to prescribe a general recipe that will allow us to produce languages. 

\paragraph*{Chomsky grammars and hierarchy.}

\begin{Def}
A \emph{Chomsky grammar} \(G\) is a tuple \((\Sigma, N, \delta, S )\) where \(\Sigma\) and \(N\) are (disjoint) finite sets, \(S \in N\) and \(\delta\) is a finite subset of \(\big((\Sigma \cup N )^*\smallsetminus\Sigma^*\big) \times (\Sigma \cup N)^*\). Namely, if \((x,y) \in \delta\), then \(x\) contains at least one symbol of \(N\). 
We call \(\Sigma\) the set of  \emph{terminals} of \(G\), \(N\) the set of \emph{non terminals},  \(S\) the \emph{starting symbol} and \(\delta\) the \emph{production rules}.
\end{Def}

\begin{notation}
We will often use the following conventions: the elements of \(\Sigma\) will be denoted by lower case letters (ex. \(\{a,b,c\}\)), the elements of \(N\) by upper case letters (ex. \(\{A,B,S\}\)), and elements \(\tau = (aB, BccA)\) of \(\delta\) as \(\tau \colon aB \rightarrow BccA\). 
\end{notation}

Given a grammar \(G =(\Sigma, N, S, \delta)\), it is always possible to associate a (possibly empty) language \(L(G)\subset\Sigma^*\). We will describe inductively the language \(L(G)\).
\begin{Def}
Let \(G=(\Sigma, N, S, \delta)\) be a grammar. We want to describe a subset \(D(G) \subseteq (\Sigma \cup N)^*\) of \emph{derivable} words. 
\begin{rules}
\item \(S\) is derivable, 
\item for \(u,v,w \in (\Sigma \cup N)^*\), if \(uvw\) is derivable and the rule \(v \rightarrow x \) is an element of \(\delta\), then \(uxw\) is derivable. In particular, we say that \(uxw\) is \emph{derivable} from \(uvw\).
\end{rules}
We say that a \emph{derivation} (for \(w_k\)) is a chain of words \(S = w_1, \dots , w_k\) such that \(w_{i+1}\) is derivable from \(w_i\). 
The \emph{language associated} to the grammar \(G\) is the intersection \( L(G) = D(G) \cap \Sigma^*\), namely all the derivable words that consists only of terminals symbols. 
\end{Def}

\begin{example}
Let \(G = (\{ a,b,c\}, \{ A,B,S\}, S, \delta)\) be a grammar, where \(\delta\) consists of the following rules:
\begin{rules}
\item \(\tau_1 \colon S \rightarrow AB\),
\item \(\tau_2 \colon A \rightarrow aAb\),
\item \(\tau_3 \colon B \rightarrow ABc\),
\item \(\tau_4 \colon A \rightarrow \epsilon\),
\item \(\tau_5 \colon B \rightarrow \epsilon\).
\end{rules}
To generate the language \(L(G)\), we will try to understand the derivable words. We start with the symbol \(S\). The only rule we can apply at the first step is \(\tau_1\), yielding \(AB\). Then we can substitute \(A\) with \(aAb\), using rule \(\tau_2\), getting \(aAbB\). Applying \(\tau_2\)  \(k\) more times gives \(a^kAb^kB\). Rule \(\tau_4\) gives \(a^kb^kB\). 
Now, if we apply rule \(\tau_3\), we will get \(a^kb^kABc\). We can repeat the process above and get some word of the form \(a^{k_1}b^{k_1}\dots a^{k_n}b^{k_n}Bc^m\). After applying rule \(\tau_5\), we would get \(a^{k_1}b^{k_1} \dots a^{k_n}b^{k_n}c^m\), which is a string composed of non terminals only. 
\end{example}

We now give a classification of some grammars.

\begin{Def}
	A Chomsky grammar \(G = (\Sigma, N, \delta, S )\) is called:
	\begin{rules}
	\item \emph{regular} if all the elements of \(\delta\) have the form \(X \rightarrow wY\), where \(X \in N\), \(Y \in N \cup \{\epsilon\}\),
		 and \(w \in \Sigma^*\);
	\item \emph{context-free} if all the elements of \(\delta\) have the form \(X \rightarrow w\), where \(X \in N\) and \(w \in (\Sigma \cup N)^*\);
	\item \emph{unrestricted} otherwise.
	\end{rules}
	The language \(L (G)\) is \emph{regular} (respectively \emph{context-free} or \emph{recursively enumerable}) if \(G\) is regular (repectively context-free or unrestricted).  
\end{Def}

The intuitive idea that one should have about the above definition is the following: a derivation in a regular language consists of substituting the last letter of a word with a new string of letters. A derivation in a context-free language consists of substituting a single letter (but not necessarily the last one) of a word with a new string of letters. The last case covers all other possibilities.

The gap between being context-free and being recursively enumerable seems (and in fact is) very big. 
The class of multiple context-free languages (MCFLs) that we are going to describe, is one of the classes that properly lives in this gap, namely properly contains context-free languages, and is properly contained in the class of recursively enumerable languages \cite{seki_multiple_1991}.

As before, we are going to describe a grammar that defines the class of MCFLs. It should be noted that this will not be a Chomsky grammar. We  start with the definition of \emph{linear rewriting function}. The idea is very simple, but the definition may look a bit convoluted. Intuitively, a linear rewriting function is a function that ``paste words together", possibly adding some string of letters. For instance, if \(a,b\) are letters and \(v,w\) words, a linear rewriting function is \((v,w) \mapsto waabvb\). 

\begin{Def} 
	Fix a finite alphabet \(\Sigma\), and let \(X = \{x_1, \dots x_n\}\) be a finite (possibly empty) set of variables. 
	A \emph{rewriting} on the variables \(\{x_1, \dots , x_n\}\) is a word \(w\in (X \cup \Sigma)^*\). We say that a rewriting \(w\) is 	
	\emph{linear} if each element of \(X\) occurs at most once. 

	Given a rewriting \(w\), we can associate to it the function \(f_w \colon (\Sigma^*)^n \rightarrow \Sigma^*\) that  associates to each 
	tuple \((u_1, \dots , u_n)\) the word obtained substituting in \(w\) each occurrence of \(x_i\) with \(u_i\). If \(n=0\),
	then \((\Sigma^*)^0 = \{\epsilon\}\) and \(f_w\) is the constant function \(w\). 
	A rewriting function is \emph{linear} if it comes from a linear rewriting.

	We say that a function \(f \colon (\Sigma^*)^n \rightarrow (\Sigma^*)^m\) is a \emph{(multiple) rewriting function} if it is a rewriting 
	function in each component. A (multiple) rewriting function coming from rewritings \(w_1, \dots, w_m\) is \emph{linear} if
	\(w_1\dots w_m\) is linear.
\end{Def}

Note that being linear in each component is not enough for a multiple rewriting function to be linear. In fact, the whole word \(w_1 \dots w_m\) must be linear, this implies that each variable \(x_i\) appears in at most one of the \(w_j\). 
In order to simplify notation, from now on we will call multiple rewriting functions simply rewriting functions.

\begin{Def}
A \emph{stratified set} is a set \(N\) equipped with a function \(\Vert\cdot \Vert \colon N \rightarrow \NN\smallsetminus\{0\}\). The function $\Vert\cdot\Vert$ is called a {\em dimension}.
\end{Def}

\begin{Def}
	A \emph{multiple context-free grammar} (MCFG) on an alphabet \(\Sigma\) is a tuple \((\Sigma, N, S, F)\) satisfying the following:
	\begin{rules}
		\item \(\Sigma\) is a finite set of \emph{terminals}.
		\item \(N\) is a finite stratified set of \emph{non terminals}.
		\item \(S \in N\) is the \emph{starting symbol} such that \(\Vert S\Vert = 1\).
		\item \(F\) is a finite set of elements of the form \((A, f, B_1, \dots, B_s)\), where \(A, B_1, \dots , B_s\) 
			are elements of \(N\), and \(f\colon  (\Sigma^*)^{\Vert B_1\Vert + \dots +\Vert B_s\Vert} \rightarrow (\Sigma^*)^{\Vert A\Vert}\) is a linear 
			rewriting function.
	\end{rules}
	
	Given an element \(\tau = (A, f, B_1, \dots, B_s)\) of \(F\), we will denote it by
	 \(\tau= A \rightarrow f(B_1, \dots , B_s)\).
	
	We say that the grammar is $k$-MCF if $\Vert A\Vert\leq k$ for all $A\in N$. 
\end{Def}

As in the case of Chomsky grammars, given a MCFG \(H\), we want to associate a language \(L(H)\) to it. 
\begin{Def}
	Let \(H= (\Sigma, N, S, F)\) be a MCFG, and let \(A \in N\). 
	We inductively define \(D_H(A) \subseteq (\Sigma^*)^{\Vert A\Vert}\) as follows: for each \(\tau \in F\):
	\begin{rules}
		\item if \(\tau = A \rightarrow f (\epsilon)\), then \(f(\epsilon) \in D_H(A)\);
		\item if \(\tau = A \rightarrow f (B_1, \dots , B_s)\)	and \(y_1 \in D_H(B_1), \dots, y_s \in D_H(B_s)\), 
			then \(f(y_1, \dots, y_s) \in D_H (A)\). 
	\end{rules}
\end{Def}

\begin{Def} 
	For a MCFG \(H = (\Sigma, N, S, F)\), we define the \emph{language associated to \(H\)} as \(D_H(S)\). We say that a language \(L\) is a 
	\emph{multiple context-free language} if there is a MCFG \(H\) such that \(L = D_H(S)\).
\end{Def}

\section{Grammars and automata}

The goal of this section is to explain the relation between grammars and automata. 
In what follows, an automaton should be thought as a ``computer with limitations", namely as a machine that can do some operations, but does not possess  the power (usually memory) of a Turing machine. As in the case of grammars, an automaton is naturally associated to a language. The intuitive explanation for this is the following: an automaton is associated to an algorithm that, given a word, either ``accepts'' or ``rejects'' it. The language associated to an automaton is the set of all ``accepted'' words.

In what follows, we fix a finite alphabet \(\Sigma\), and all the definitions are understood to be dependent on \(\Sigma\).
Recall that a \emph{partial function} \(f\colon A \dashrightarrow B\) is a map of sets defined on a subset \(C\subseteq A\), called the \emph{domain} of \(f\).

\begin{Def}
A \emph{storage type} is a tuple \( T = (C,P,F,C_I)\) satisfying the following: \(C\) is a set, called the set of \emph{storage configurations}; \(P\) is a subset of the power set \(\mathcal{P}(C)\), and the elements of \(P\) are called \emph{predicates}; \(F\) is a set of partial functions \(f\colon C \dashrightarrow C\) called \emph{instructions}; and \(C_I \subseteq C\) is a set of \emph{initial configurations}.
\end{Def}

\begin{Def}
	An \emph{automaton with storage} is a tuple \(\M = (Q, T, I, \delta)\), where \(Q\) is a finite set of \emph{states}, \(T = (C,P,F,C_I)\) is a storage type, \(I\) is a tuple \(I= (q_I, c_I, Q_F)\) where \(q_I \in Q\) is the  \emph{initial state}, \(Q_F \subseteq Q\) are the \emph{final states}, and \(c_I \in  C_I\) is the \emph{initial storage configuration}.
	Finally \(\delta \subseteq Q \times (\Sigma \cup \{\epsilon\}) \times P \times F\times Q\) is a finite set of \emph{transitions}. 
\end{Def}

\begin{Def}
	Given an automaton with storage \(\M = (Q,T, I, \delta)\), we define the \emph{graph realisation of \(\M\)}, denoted by \(\Gamma (\M )\), as the 
	following oriented labelled graph:
	\begin{rules}
		\item The vertices of \(\Gamma (\M )\) are the elements of \(Q \times C\). 
		\item To each  $\tau = (q_1, \sigma , p , f , q_2) \in \delta$, we associate an oriented edge between the pair \(((q_1, c_1), (q_2, c_2))\) if 
		\(c_1 \in p\), \(f(c_1) = c_2\). In that case the label of this edge is \(\sigma\). 
	\end{rules}
\end{Def}

Note that \(f\) is a partial function, so with \(f(c_1)= c_2\) we are also asking that \(c_1\) is in the domain of \(f\).

\begin{Def}
	Let \(\Sigma\) be an alphabet, and let \(g \colon(\Sigma \cup \{\epsilon\})^* \rightarrow \Sigma^*\) be the morphism of monoids that 
	sends \(\epsilon\) to the empty word, and is the identity on all the other generators. 
	Given a word \(w\in \Sigma^*\) we say that a word \(w' \in (\Sigma \cup \{\epsilon\})^*\) is an \emph{\(\epsilon\)-expansion} of \(w\) if 
	\(g(w')=w\).  
\end{Def}

\begin{Def}
	Given an automaton with storage \(\M\) we define a language \(L (\M) \subseteq \Sigma^*\) as follows.
	A word \(w\) is in \(L (\M)\) if and only if there is an oriented path \(\gamma\) in \(\Gamma (\M)\) starting from \((q_I, c_I)\)
	and ending in a vertex \((q,c)\) with \(q \in Q_F\) such that the word formed by the labels of \(\gamma\) is an \(\epsilon\)-expansion
	of \(w\).
\end{Def}

In order to improve the readability of the above definitions, we will provide a fairy tale example to clarify the role of the various entities above.

Imagine there is a group of children playing a treasure hunt in a town. The town is finite (as towns tend to be) and each block of the town is one of the states \(Q\). The children possess an extremely bad memory, but luckily each of them is equipped with a book to write notes. The set \(C\) consists of all possible books with all possible contents opened to any page. {The set \(P\) contains some description about the state of the book, for example ``the set of all books open on a blank page" or ``all books open to the 12th page''.}

Now suppose that there is a voice guiding the game in order to help the children find the treasure, and in particular every now and then is reading out loud some hint (the alphabet \(\Sigma\)). The voice represents the word \(w\) in the alphabet. When a hint (letter) is read, the children will perform an action, and the possible actions are encoded in the set \(\delta\). 

At the start of the game, the children will all be in the central block of the city (\(q_I\)), with an empty book open on the first page (\(c_I\)), and the treasures will be buried in some blocks (\(Q_F\)) of the city. 
The typical turn will work as follows: every child will check on which block they are standing on (an element of \(Q\)), then listen to what the voice is saying (an element of \(\Sigma\)), and look if there is something written on the book (an element of \(P\)). Then each child decides which strategy apply on that turn (i.e. picks an element of \(\delta\)), which is compatible with the information \(Q, \Sigma\) and \(P\). Following such a strategy, they may change page or write something on the book (an element of \(F\)), and go to a new block (an element of \(Q\)) accordingly. If at any time a child cannot perform an action, then he or she is disqualified from the game. 
When the voice stops giving hints, each child will start digging exactly where they stand and see if a treasure is found.

If at least one child has found a treasure, then the instructions were correct (and hence the word \(w\) is accepted).

Let's start with some famous automata in order to familiarize with the above concepts.

\begin{Def}
	A \emph{trivial storage} is a storage type \(T = (C, P , F, C_I)\) with \( C = \{C_I\}\), $P = \{C\}$ and \(F= \{\texttt{id}\}\).
\end{Def}

\begin{Def}
	A \emph{finite state automaton} (FSA) is an automaton with storage with trivial storage. 
\end{Def}

It is a very easy exercise to see that a FSA is completely described by a finite oriented graph with edges labeled by elements of \(\Sigma\) (and not \(\Sigma \cup \{\epsilon\}\)). 

The following theorem forms a bridge between languages associated to grammars, and languages accepted by automata.

\begin{theorem}\cite{hopcroft_formal_1969}
	For a language \(L \subseteq \Sigma^*\) the following are equivalent:
	\begin{rules}
	\item \(L\) is associated to a regular grammar;
	\item \(L\) is accepted by a FSA.
	\end{rules}
\end{theorem}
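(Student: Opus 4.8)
The plan is to prove the two implications separately, in each direction routing through the graph realisation of the automaton; this is the standard Kleene-type argument, so I will only indicate the constructions and the inductions that make them work.

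\textbf{From grammars to automata.} Given a regular grammar \(G = (\Sigma, N, S, \delta)\), I would build an FSA \(\M\) whose state set consists of the elements of \(N\), one fresh accepting state \(q_f\), and the auxiliary states introduced in the next sentence; the storage is trivial, the initial state is \(S\), and \(Q_F = \{q_f\}\). A rule \(X \to \sigma_1\cdots\sigma_m Y\) with \(\sigma_i \in \Sigma\) and \(Y \in N\) is encoded by a directed path \(X \xrightarrow{\sigma_1}\bullet\xrightarrow{\sigma_2}\cdots\xrightarrow{\sigma_m} Y\) through \(m-1\) fresh auxiliary states; a rule \(X \to \sigma_1\cdots\sigma_m\) (the case \(Y = \epsilon\)) is encoded the same way but terminating at \(q_f\); rules with \(w = \epsilon\) contribute a single \(\epsilon\)-labelled edge to \(Y\) or to \(q_f\). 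The structural fact that drives the correctness proof is that in a regular grammar every derivable word is either in \(\Sigma^*\) or has the form \(uY\) with \(u \in \Sigma^*\) and \(Y \in N\) a unique trailing non-terminal; hence a derivation is literally a sequence of such words, and one shows by induction on its length that \(uY\) is derivable in \(G\) if and only if \(\Gamma(\M)\) has a path from \((S, c_I)\) to the vertex of \(Y\) whose label word is an \(\epsilon\)-expansion of \(u\). Taking \(Y = q_f\) yields \(L(G) = L(\M)\).

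\textbf{From automata to grammars.} Conversely, given an FSA \(\M\), pass to \(\Gamma(\M)\); since the storage is trivial its vertex set is (in bijection with) \(Q\), so \(\Gamma(\M)\) is a finite directed graph on \(Q\) with edges labelled in \(\Sigma \cup \{\epsilon\}\), a distinguished vertex \(q_I\), and a set \(Q_F\) of accepting vertices. I would take the regular grammar with non-terminals \(N = Q\), starting symbol \(q_I\), a rule \(q \to \sigma q'\) for each edge \(q \xrightarrow{\sigma} q'\) (legitimate, since \(\sigma \in \Sigma^*\), and if \(\sigma = \epsilon\) this is the rule \(q \to q'\), still of the allowed shape \(X \to wY\) with \(w = \epsilon\)), and a rule \(q \to \epsilon\) for each \(q \in Q_F\). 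An induction on the length of a path then shows that \(u \in \Sigma^*\) is accepted by \(\M\) — i.e. some path from \(q_I\) to \(Q_F\) has label word an \(\epsilon\)-expansion of \(u\) — exactly when \(u\) is derivable in this grammar.

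\textbf{Main obstacle.} There is no deep difficulty here; the only thing requiring care is the bookkeeping around \(\epsilon\). The right-hand sides \(w\) of grammar rules can be longer than one letter, which is what forces the auxiliary states in the first construction, and the automaton may traverse \(\epsilon\)-labelled edges, so both inductions must be phrased in terms of \(\epsilon\)-expansions of the read word rather than the word itself. If one first invokes the remark that an FSA can be presented with edges labelled only by \(\Sigma\), the second direction simplifies, but the inductions are otherwise exactly as above.
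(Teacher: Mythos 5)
The paper does not prove this statement at all --- it is quoted from Hopcroft--Ullman \cite{hopcroft_formal_1969} as background --- so there is nothing to compare against. Your argument is the standard textbook proof (grammar rules $X \to wY$ unrolled into labelled paths with fresh auxiliary states in one direction; states as non-terminals with a rule per edge plus $q\to\epsilon$ for accepting states in the other), the invariant ``$uY$ derivable iff some path from the start vertex to the vertex of $Y$ reads an $\epsilon$-expansion of $u$'' is the right one, and the $\epsilon$-bookkeeping is handled correctly, so the proposal is correct.
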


\begin{Def}
	A \emph{push-down storage} over a finite alphabet alphabet \(\Omega\)  is a storage type \(T = (C, P, F, C_I)\) where: 
	\begin{itemize}
	\item \(C= \Omega^*\).
	\item We define the set \(\mathtt{equals}(\omega) \) as the set of words in \(\Omega^*\) that
	end with \(\omega\) (note that \(\mathtt{equals}(\epsilon)\) is the set \(\{\epsilon\}\)). 
	Then \(P= \{\mathtt{equals}(\omega) \mid \omega \in \Omega \cup \{\epsilon\}\}\).
	\item We define the function \(\mathtt{push}(\omega) \colon \Omega^* \rightarrow \Omega^*\) that sends \(x\) to \(x\omega\). 
		We also define a partial function \(\mathtt{pop}_\omega\colon \mathtt{equals}(\omega) \rightarrow \Omega^*\) that sends \(x\omega\) to \(x\).
		Then \(F = \{\mathrm{Id}\} \cup \{\mathtt{pop}_\omega , \mathtt{push}(\omega) \colon \omega \in \Omega\}\).
	\item \(C_I = \{\epsilon\}\).
\end{itemize}
\end{Def}

The intuitive idea behind the push-down storage is to have a stack of papers that can grow arbitrarily large, but the automaton can read only what is written on the top-most paper. This corresponds to the predicate \(\mathtt{equals}(\omega)\). Then one can put another paper on top with the letter \(\omega'\) (\(\mathtt{push}({\omega'})\)) or remove the old one (\(\mathtt{pull}_{\omega}\)). Note that the alphabet \(\Omega\) is, in general, not the same as \(\Sigma\).

\begin{Def}
	A \emph{push-down automaton} is an automaton with storage with push-down storage.
\end{Def}

\begin{theorem}\cite{chomsky_context-free_1962}
	For a language \(L \subset \Sigma^*\) the following are equivalent:
	\begin{rules}
	\item \(L\) is associated to a context-free grammar;
	\item \(L\) is accepted by a push-down automaton.
	\end{rules}
\end{theorem}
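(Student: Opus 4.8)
This is Chomsky's classical equivalence between context-free grammars and push-down automata, so the plan is to recast the two standard constructions in the automaton-with-storage language fixed above, paying attention to the features of the (rather rigid) notion of push-down storage used here: stack operations act on a single letter, and there is no ``always true'' predicate. I would prove the two implications separately.

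\textbf{From a context-free grammar to a push-down automaton.} Given $G = (\Sigma, N, \delta, S)$, I would build a push-down automaton $\M$ with stack alphabet $\Omega = \Sigma \cup N$ that simulates leftmost derivations of $G$: at each moment the stack holds the as-yet-unread suffix of the current sentential form, leftmost letter on top. Since $C_I = \{\epsilon\}$, the automaton starts in a state $q_I$ with empty stack, and an $\epsilon$-transition with predicate $\mathtt{equals}(\epsilon)$ and instruction $\mathtt{push}(S)$ moves it to a working state $q_w$. From $q_w$: for every rule $A \to \omega_1 \cdots \omega_\ell$ of $\delta$ there is a block of $\epsilon$-transitions, using finitely many auxiliary states, whose net effect is to replace the top letter $A$ by $\omega_1 \cdots \omega_\ell$, realised as $\mathtt{pop}_A$ followed by $\mathtt{push}(\omega_\ell), \dots, \mathtt{push}(\omega_1)$; and for every $a \in \Sigma$ there is a transition reading $a$ with predicate $\mathtt{equals}(a)$ and instruction $\mathtt{pop}_a$, returning to $q_w$. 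A last $\epsilon$-transition with predicate $\mathtt{equals}(\epsilon)$ and instruction $\mathrm{Id}$ goes from $q_w$ to the unique accepting state. One then checks, by induction on lengths, that the accepting paths of $\M$ whose label is an $\epsilon$-expansion of $w$ are precisely the leftmost derivations $S \Rightarrow^{*} w$ in $G$, so $L(\M) = L(G)$.

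\textbf{From a push-down automaton to a context-free grammar.} Given $\M = (Q, T, I, \delta)$ with $T$ a push-down storage over $\Omega$, I would first normalise it. Every instruction of a push-down storage is $\mathrm{Id}$, $\mathtt{push}(\omega)$ or $\mathtt{pop}_\omega$, so each transition changes the stack by at most one letter; by routine manipulations one arranges that $\M$ begins by pushing a fresh bottom marker $Z_0$ and that its accepting runs are exactly those ending with empty stack at a single accepting state $q_{\mathrm{acc}}$. Then I would apply the triple construction: take non-terminals $S$ and $[p, \omega, q]$ for $p, q \in Q$ and $\omega \in \Omega$, with the intended meaning that $[p, \omega, q]$ generates exactly those $w$ for which $\M$ has a run from $(p, \gamma\omega)$ to $(q, \gamma)$, for every $\gamma \in \Omega^*$, reading an $\epsilon$-expansion of $w$ and never touching $\gamma$. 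The productions: $S \to [q_0, Z_0, q_{\mathrm{acc}}]$, where $q_0$ is the state reached just after the initial push of $Z_0$; $[p, \omega, q] \to \sigma$ for each transition from $p$ to $q$ reading $\sigma$ with predicate $\mathtt{equals}(\omega)$ and instruction $\mathtt{pop}_\omega$; $[p, \omega, q] \to \sigma\,[p', \omega, q]$ for each transition from $p$ to $p'$ reading $\sigma$ with predicate $\mathtt{equals}(\omega)$ and instruction $\mathrm{Id}$; and $[p, \omega, q] \to \sigma\,[p', \omega', r]\,[r, \omega, q]$ for every $r \in Q$ and every transition from $p$ to $p'$ reading $\sigma$ with predicate $\mathtt{equals}(\omega)$ and instruction $\mathtt{push}(\omega')$. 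The heart of the proof is the claim that each $[p, \omega, q]$ generates precisely the language described above; this is proved by a double induction, on the length of a run and on the length of a derivation, and then the $S$-productions yield $L(G) = L(\M)$.

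\textbf{The main obstacle.} Neither construction is deep, but two points need genuine care. First, the correctness of the triple construction: in matching derivations against runs one has to track exactly which portion of the stack a sub-run is permitted to consume, and one must check that the normal-form reductions really preserve the accepted language; I expect this induction to be the bulk of the work. Second, a small but real subtlety peculiar to the push-down storage of this paper: because only one-letter pushes and pops are available and because no predicate equals the whole of $\Omega^*$, the ``replace $A$ by $\omega_1 \cdots \omega_\ell$'' block above cannot be written down naively --- after $\mathtt{pop}_A$ the newly exposed top is unknown, so the ensuing $\mathtt{push}(\omega_\ell)$ must be licensed by a finite family of transitions ranging over the predicates $\mathtt{equals}(\tau)$, $\tau \in \Omega \cup \{\epsilon\}$ (equivalently, one records the exposed top letter in the state). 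Both are routine once noticed.
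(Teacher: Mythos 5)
The paper does not prove this statement; it is quoted as a classical result with a citation to Chomsky, so there is no in-paper argument to compare against. Your sketch is the standard two-way construction (simulating leftmost derivations on the stack in one direction, the triple construction in the other), correctly adapted to this paper's single-letter push-down storage and its lack of an ``always true'' predicate, and it looks sound.
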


We now want to describe the last automaton we are interested in, namely the tree-stack automaton. 
\begin{Def}
	Let \(S\) be a set. If \(uv \in S^*\) we say that \(u\) is a \emph{prefix} for \(uv\). Given a set \(D \subseteq S^*\) we say that \(D\) is
	\emph{prefix-closed} if for each word \(w \in D\), all the prefixes of \(w\) are in \(D\).
	Similarly, we say that \(v\) is a \emph{suffix} for \(uv\).
\end{Def}
\begin{Def}
	Given an alphabet \(\Omega\), an \emph{\(\Omega\)-tree} is a partial function \(T\colon \NN^* \dashrightarrow \Omega \cup \{\root\}\) such that \(\mathrm{domain}(T) \subseteq \NN^*\) is prefix-closed and $T^{-1}(\root) = \{\epsilon\}$.
\end{Def}
Note that, this corresponds to a rooted tree, in the usual graph-theory sense, where each edge is labeled by a natural number, the root is labeled by the symbol \(\root\) and every other vertex is labeled by an element of \(\Omega\).

\begin{Def} 
	An \emph{\(\Omega\)-tree with a pointer} is a pair \((T, p)\) such that \(T\) is an \(\Omega\)-tree and \(p \in \mathrm{domain}(T)\).
\end{Def}
One should think of the pointer as a selected vertex of the tree. Figure 1 may provide some clarification.

\begin{figure}[h]
\begin{minipage}[c]{7cm}

\begin{tikzpicture}[
normal/.style={circle, draw=black, minimum size=5mm},
selected/.style={circle, draw=black, fill=gray!15, very thick, minimum size=5mm}]
\tikzstyle{level 1}=[sibling distance=35mm]
\tikzstyle{level 2}=[sibling distance=15mm]

\node(0)[normal]{\(\root\)}
	child{node[normal]{\(b\)}
		child{node[normal]{\(a\)} edge from parent node [left]{1}}
		edge from parent node [left]{1}}
	child{node[normal]{\(b\)}
		child{node[selected]{\(c\)} edge from parent node [left]{1}}
		child{node[normal]{\(a\)} edge from parent node [left]{2}}
		child{node[normal]{\(a\)} 
			child{node[normal]{\(c\)}edge from parent node [left]{1}}
		edge from parent node [left]{3}}
	edge from parent node [left]{2}};

\end{tikzpicture}
\end{minipage}
\begin{minipage}[c]{6cm}
\[T \colon \begin{cases}
\epsilon &\mapsto \root\\
1 &\mapsto b\\
11 &\mapsto a\\
2 &\mapsto b\\
21 &\mapsto c\\
22 &\mapsto a\\
23 &\mapsto a\\
231 &\mapsto c
\end{cases}\]
\end{minipage}
\caption{Graphic representation of\((T, 21)\)}
\end{figure}
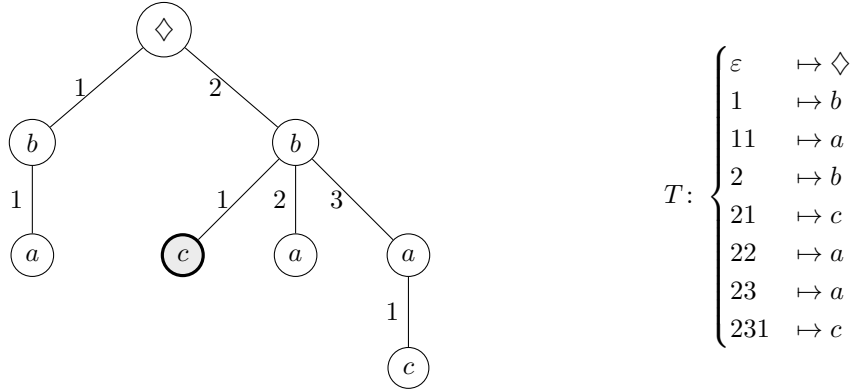

\begin{notation}
Let \(F \colon C \dashrightarrow X\) be a partial function, and let \(c \not\in\mathrm{domain}(F)\).
Then we define \(F[c\mapsto x]\) as the partial function defined on \(\mathrm{domain}(F) \cup \{c\}\), that agrees with \(F\) on \(\mathrm{domain}(F)\) and sends \(c\) to \(x\).
\end{notation}

\begin{Def}
	A \emph{tree-stack storage} over a finite alphabet alphabet \(\Omega\)  is a storage type \(T = (C, P, F, C_I)\) where: 
	\begin{itemize}
	\item \(C = \{ (T, p)\mid (T, p)\) is an \(\Omega\)-tree with pointer$\}$. 
	\item For \(\omega \in \Omega \cup \{\root\}\), we set \(\mathtt{equals}(\omega) = \{(T,p) \in C \mid T(p) = \omega\}\) and 
	\(\mathtt{notequals}(\omega) = \{(T,p) \in C \mid T(p) \neq \omega\}\).
	
	Then \(P= \{\mathtt{equals}(\omega), \mathtt{notequals}(\omega) \mid \omega \in \Omega \cup \{\root\}\}\cup \{C\}\).
	\item For \(n \in \NN\) and \(\gamma \in \Omega\), we define the following partial functions:
	\begin{rules}
	 	\item \(\texttt{push}_n({\gamma}) \colon \{(T,p) \mid pn \not\in \mathrm{domain}(T)\} \rightarrow C\) as the map \((T,p) \mapsto (T[pn\mapsto \gamma], pn)\).
	 	\item \(\texttt{up}_n \colon \{(T,p) \mid pn \in \mathrm{domain} (T)\} \rightarrow C\) as the map \((T,p)  \mapsto (T, pn)\).
	 	\item \(\texttt{down} \colon C-\texttt{equals}(\root) \rightarrow C\) as the map that sends \((T,pm) \mapsto (T,p)\), for \(m \in \NN\).
	 	\item \(\texttt{set}_{\gamma} \colon C - \texttt{equals}(\root) \rightarrow C\) as the map that sends \((T, p)\) to \((T',p)\), 
	 	where \(T'\) is obtained by \(T\)
	 	changing the value of \(p\) to \(\gamma\).
	\end{rules}
	Then \(F = \{\mathrm{Id}, \texttt{push}_n(\gamma), \texttt{up}_n, \texttt{down}, \texttt{set}_{\gamma} \mid \gamma \in \Omega, n \in \NN\}\). 
	\item \(C_I = \{(\epsilon \mapsto \root, \epsilon)\}\).
\end{itemize}
\end{Def}

One should not that the command $\texttt{push}_n(\gamma)$ can only be used if there is no branched labelled $n$ emanating form the vertex $p$. 

\begin{notation}
	For a subset \(F \) of \(\Omega\), we will write $\mathtt{notequals}(F)$ to indicate the finite union of \(\{\mathtt{noteequals} (\omega) \colon \omega \in F\}\).
	In particular, if we have the command \((q, a, \mathtt{notequals}(F), f, q')\) this will indicate the following finite set of rules \(\{ (q, a, \mathtt{notequals}(\omega), f, q') \colon \omega \in F\}\).
\end{notation}

\begin{Def}
A \emph{tree-stack automaton} is an automaton with storage with tree-stack storage.
\end{Def}

\begin{Def}\label{def: k-restricted}
	We say that a tree-stack automaton is {\em $k$-restricted} if for any $p\in \NN^*$, $n\in\NN$ and any path in the graph realisation $\Gamma(\mathcal{M})$ starting at $(q_I, c_I)$, the following holds.
	There are at most $k$ edges of the form $(q_1, (T_1, p))$ to $(q_2, (T_2, pn))$, where $q_1, q_2\in Q$ and $T_1, T_2$ are tree-stacks. 
\end{Def}
Intuitively, Definition \ref{def: k-restricted} states that every vertex in the tree-stack can be accessed from below a uniformly finite number of times. We will see in Lemma \ref{uniform} that this is equivalent to the fact that each vertex in the tree stack is only accessed for a uniformly bounded amount of time. 

\begin{theorem} \cite{denkinger_automata_2016}
	For a language \(L \subset \Sigma^*\) the following are equivalent:
	\begin{rules}
	\item \(L\) is associated to a $k$-MCFG;
	\item \(L\) is accepted by a $k$-restricted tree-stack automaton.
	\end{rules}
\end{theorem}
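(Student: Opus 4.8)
The plan is to establish the two implications separately, extending the correspondence between context-free grammars and push-down automata recalled above. The conceptual bridge is that a derivation of an MCFG is naturally a tree --- the \emph{derivation tree}, whose nodes are the rules applied --- and that a tree-stack storage is exactly the data structure able to hold such a tree and walk around it. Throughout, I will call a \emph{visit} to a vertex $p$ of a tree-stack a maximal portion of a run during which the pointer stays in the subtree rooted at $p$; for $p$ different from the root every visit begins with an edge into $p$ from its parent, so Definition~\ref{def: k-restricted} says that a $k$-restricted automaton visits every vertex at most $k$ times. This bound is used in both directions.

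For ``$k$-MCFG $\Rightarrow$ $k$-restricted tree-stack automaton'', let $H=(\Sigma,N,S,F)$ be a $k$-MCFG. I would build $\M$ so that its tree-stack holds a derivation tree of $H$: the vertex at position $pn$ is labelled by the pair (rule $A\to f(B_1,\dots,B_s)$ used there, index $n$ of this vertex in its parent), and it has $s$ children storing derivations rooted at $B_1,\dots,B_s$. The automaton performs a depth-first traversal that outputs the unique component of $S$. Its finite control only records which output component $w_m$ of the current rule it is reading (here $m\le\Vert A\Vert\le k$) and the position inside the fixed word $w_m$; terminal letters of $w_m$ are emitted as edge labels, and the variable pointing to the $\ell$-th component of $B_j$ makes $\M$ move to the $j$-th child ($\texttt{push}_j$ on the first descent, $\texttt{up}_j$ later) and read that child's $\ell$-th component. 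When a component is exhausted $\M$ moves $\texttt{down}$; because every vertex stores its own child-index and $f$ is linear (each variable occurs at most once among $w_1,\dots,w_{\Vert A\Vert}$), the label now visible together with the index of the component just completed determine uniquely where to resume, so no unbounded return data is needed in the control. A vertex whose non-terminal has dimension $d\le k$ is entered from its parent at most $d$ times, so $\M$ is $k$-restricted; the rules $A\to f(\epsilon)$ are the leaves.

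For the converse, let $\M$ be a $k$-restricted tree-stack automaton; after a routine normalisation I may assume every accepting run ends at the root. The non-terminals should record, for a vertex $v$, a finite \emph{visit profile}: the label of $v$ together with the sequence of at most $k$ visits to $v$, each described by the states in which $\M$ enters and leaves it (the label is unchanged between consecutive visits, since $v$ is untouched then). Such a non-terminal has dimension equal to the number of visits and is meant to derive the tuple of words read during those visits. A production expresses how the visits to $v$ are assembled from the visits to the children of $v$: each child-visit is nested in a unique visit to $v$, so the children's tuples, interleaved with the constant strings that $\M$ reads while sitting at $v$, reconstruct the tuple of words read during the visits to $v$; this rewriting function is linear because every child-visit word occurs once and contiguously, and the dimension stays at most $k$ by $k$-restriction. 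The start symbol is the dimension-one non-terminal of the root, carrying the single visit that runs from $q_I$ to a state of $Q_F$. Correctness in each direction is proved by induction --- on derivation length one way, on the size of the tree-stack the other.

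The step I expect to be the main obstacle is the converse direction, specifically: (i) showing that an arbitrary run of $\M$ decomposes, uniquely, into the nested system of visits around which the grammar is organised, which requires careful treatment of revisiting a vertex and of relabelling it via $\texttt{set}$; and (ii) verifying that the assembled rewriting functions are genuinely \emph{linear} and of dimension at most $k$ and that only finitely many productions arise --- it is precisely here that the $k$-restriction hypothesis is consumed, since without it the visit profiles, hence the non-terminals, would need unbounded dimension. The forward direction is comparatively routine once one observes that linearity of the rewriting functions, together with recording the child-index at each tree-stack vertex, is exactly what allows the finite control to dispense with a stack of return addresses.
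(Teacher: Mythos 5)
The paper gives no proof of this statement: it is imported verbatim from Denkinger's paper, as is the cycle-freeness lemma that follows it. Your outline is, in substance, the argument of that cited source. In the forward direction the tree-stack materialises a derivation tree, and a vertex carrying a nonterminal $A$ is ascended into once per component, i.e.\ at most $\Vert A\Vert\le k$ times; your handling of the return-from-child step via linearity and the stored child-index is the right observation and the direction is sound. In the converse, nonterminals are visit profiles of a vertex, of dimension bounded by $k$ exactly because the automaton is $k$-restricted; this too is the standard route.

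One concrete point in the converse would fail as literally written. You propose one production per vertex, with rewriting functions interleaving the children's tuples ``with the constant strings that $\M$ reads while sitting at $v$''. A tree-stack automaton may read arbitrarily long words while the pointer is parked at $v$ (loops in the state set using $\mathrm{Id}$ or \texttt{set} instructions), so these ``constants'' range over an infinite set and your rule set $F$ would be infinite, whereas an MCFG requires $F$ finite. You must either first pass to an equivalent cycle-free $k$-restricted automaton (precisely the lemma the paper quotes after this theorem), which bounds any sojourn at a fixed vertex by $|Q|\cdot(|\Omega|+1)$ and hence leaves only finitely many possible interleaving words, or refine the construction so that each production simulates a single transition rather than a whole inter-excursion segment. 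Relatedly, the ``label of $v$'' in a visit profile must be recorded as the sequence of labels at the visit boundaries, since \texttt{set} can change it mid-visit. You flag both points as anticipated obstacles and both are repairable, but the finiteness of $F$ is the one place where the sketch, taken at face value, does not yet produce a grammar.
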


\begin{Def}
	A tree-stack automata is {\em cycle-free} if for every non-trivial loop in the graph realisation $\Gamma(\mathcal{M})$, there is at least one \texttt{push}, \texttt{up} or \texttt{down} command. 
\end{Def}

\begin{lemma}\cite{denkinger_automata_2016}
	Given a $k$-restricted tree-stack automaton $\mathcal{M}$ there exists a tree-stack $k$-restricted automaton $\mathcal{M}'$ such that $L(\mathcal{M}) = L(\mathcal{M}')$ and $\mathcal{M}'$ is cycle-free.
\end{lemma}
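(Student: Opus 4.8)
The plan is to analyse which loops of $\Gamma(\mathcal M)$ violate the cycle-free condition and to reroute the offending computations through fresh parts of the tree-stack.

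First I would pin down the obstruction. The only instructions that neither move the pointer nor enlarge the tree are $\mathrm{Id}$ and $\texttt{set}_\gamma$, so a non-trivial loop of $\Gamma(\mathcal M)$ containing no \texttt{push}, \texttt{up}, or \texttt{down} is a loop all of whose transitions carry $\mathrm{Id}$ or $\texttt{set}_\gamma$; along such a loop the pointer is pinned at one vertex $p$ and only the label $T(p)$ changes, returning to its original value. Hence the ``pointer-static'' behaviour of $\mathcal M$ at any vertex is governed by a single finite automaton $\mathcal A$ with state set $Q\times(\Omega\cup\{\root\})$, the second coordinate recording the current label under the pointer, and every maximal pointer-static stretch of a run of $\mathcal M$ is a path in $\mathcal A$. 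It therefore suffices to re-implement these $\mathcal A$-computations without creating $\mathrm{Id}$/$\texttt{set}$-loops.

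The idea is to use the tree for this. Fix a symbol $\sharp\notin\Omega$ and an index $m$ strictly larger than every index occurring in a transition of $\mathcal M$. Where $\mathcal M$ would run an $\mathcal A$-computation at a vertex $p$, $\mathcal M'$ instead descends into the $\sharp$-chain hanging from $p$ through the index $m$: each letter consumed by $\mathcal A$ (possibly none at a given step) is accompanied by a $\texttt{push}_m(\sharp)$, or by an $\texttt{up}_m$ along an already built $\sharp$-chain, while the state of $\mathcal A$ is carried in the finite control; when $\mathcal A$ reaches its target pair $(q_2,\gamma)$, $\mathcal M'$ walks back down the $\sharp$-chain — recognisable because its vertices carry the fresh symbol $\sharp$ — to $p$, applies $\texttt{set}_\gamma$ if $\gamma\neq T(p)$, and resumes simulation in state $q_2$. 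Every transition of $\mathcal M'$ is then either a \texttt{push}, \texttt{up}, or \texttt{down}, or one of the finitely many $\mathrm{Id}$/$\texttt{set}$ transitions entering or leaving such a detour; one checks that every such $\mathrm{Id}$/$\texttt{set}$ transition has a detour state as an endpoint and that no detour state can be both entered and left by $\mathrm{Id}$/$\texttt{set}$ transitions, so no non-trivial loop of $\Gamma(\mathcal M')$ uses only \texttt{Id} and \texttt{set}. Thus $\mathcal M'$ is cycle-free, and $L(\mathcal M')=L(\mathcal M)$ because a run of $\mathcal M'$ is precisely a run of $\mathcal M$ with these invisible detours inserted. (The pointer-static loops that read the empty word can alternatively be removed beforehand by the usual $\epsilon$-closure of $\mathcal A$.)

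The step I expect to be the genuine obstacle is verifying that $\mathcal M'$ is still \emph{$k$}-restricted. A vertex $p$ can be revisited many times along a run of $\mathcal M$ — only descents into $p$ from below are capped by $k$, arrivals from above are not — and a naive implementation re-climbs the $\sharp$-chain below $p$ on every visit, producing unboundedly many edges from $p$ to $pm$. Circumventing this needs a more careful management of the scratch space (for instance, first normalising $\mathcal M$ so that the pointer-static activity at each vertex is uniformly bounded, or arranging the $\sharp$-subtree so that each parent–child pair is traversed at most once), and this is the delicate point established in \cite{denkinger_automata_2016}.
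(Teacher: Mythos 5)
The paper offers no proof of this lemma (it is imported wholesale from \cite{denkinger_automata_2016}), so your argument has to stand on its own, and it does not quite. You have correctly isolated both the construction (a violating loop uses only $\mathrm{Id}$ and $\texttt{set}$, hence is pinned at one vertex and governed by a finite automaton on $Q\times(\Omega\cup\{\root\})$; reroute each pointer-static stretch into a fresh chain of $\sharp$-vertices so that every step is accompanied by a \texttt{push} or \texttt{down}) and the one property genuinely at risk, namely $k$-restrictedness. But the lemma explicitly asserts that $\mathcal{M}'$ is still $k$-restricted, so preserving that bound is the content of the proof, not a remark to be handed back to the reference; your final paragraph concedes exactly the step that needs to be done. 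Worse, the construction as literally described can fail: re-entering an already built $\sharp$-chain at $p$ via $\texttt{up}_m$ creates another edge from a configuration at $p$ to one at $pm$, and nothing in your setup bounds how often this happens, so the pair $(p,m)$ can violate the $k$-restriction.

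The gap is closable, and it is worth seeing how. For a $k$-restricted automaton (cycle-free or not), the number of maximal pointer-static stretches at a fixed $p$ along any run is uniformly bounded: $p$ is entered from below at most $k$ times by $k$-restrictedness, and from above at most $kN$ times, where $N$ is the number of distinct indices occurring in \texttt{push} instructions of $\delta$ (a child $pn$ can only be visited after a $\texttt{push}_n$ creates it, so only $N$ child indices ever occur at $p$, each contributing at most $k$ ascents and hence at most $k$ returns). Call this bound $M$. Now give each stretch its \emph{own} chain: reserve fresh indices $m+1,\dots,m+M$ and open each detour with a non-deterministically chosen $\texttt{push}_{m+j}(\sharp)$. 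Since $\texttt{push}_{m+j}$ is undefined once $p(m+j)$ exists, each index is consumed by at most one detour, no counting in the finite control is needed, and $M$ indices always suffice; every parent--child pair of the new vertices then carries exactly one ascending edge, so $\mathcal{M}'$ remains $k$-restricted. Note finally that your parenthetical fallback of ``first normalising $\mathcal{M}$ so that the pointer-static activity at each vertex is uniformly bounded'' cannot work as stated: a single stretch may read arbitrarily many letters, so its \emph{length} is not boundable before the cycles are removed; it is the \emph{number} of stretches that is bounded, and that is what the argument must use.
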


It is true that a \(1\)--restricted tree-stack automaton is equivalent to a push-down automaton. 
It is tempting to think that this equivalence can be realized just taking the stack of the push-down automaton as the tree-stack. 
However, this may often fail to be \(1\)--restricted. For instance, using a \texttt{pop} command followed by a \texttt{push} command would be seen as going down and up the tree. Repeating this we may visit a vertex arbitrarily often. Thus, this tree-stack automaton need not be $k$-restricted for any $k$. A $1$--restricted tree-stack automaton contains no \texttt{up} commands. Thus once a \texttt{down} command has been issued there is no way to return to the vertex that was left. 

\begin{example}
Let \(\mathcal{M}=(Q,T,I, \delta)\) be a push-down automaton over a finite alphabet \(\Omega\). We want to define a \(1\)--restricted tree-stack automaton \(\mathcal{N}\) such that \(L(\mathcal{M}) = L(\mathcal{N})\).

We define \(\mathcal{N}=(Q', T', I', \delta')\)  to be the following tree-stack automaton.
\begin{rules}
\item For each element \(\omega \in \Omega\), let \(\square_\omega\) be an extra symbol. Then \(Q'= Q \cup \{(\square_\omega,q)\mid \omega \in \Omega, q \in Q\}\).
\item \(T'\) is the tree-stack storage with respect to an \(\Omega'\)--tree, where \(\Omega' = \Omega \cup \{\star\}\).
\item The initial and final states of \(I'\) are the same as \(I\) (because \(Q\subseteq Q'\)).
\item \(\delta'\) will be the set containing the following instructions:
\begin{enumerate}
    \item For each rule \(\tau = (q_1, \sigma, p, f, q_2) \in \delta\) there is a corresponding rule \(\tau'= (q_1, \sigma, p', f', q_2') \in \delta'\) as follows. If \(p = \mathtt{equals}(\omega)\), then \(p'= \mathtt{equals}(\omega)\) (note that those predicates have the same names, but are subsets of different power sets). Similarly, if \(p\) represents the whole set of configurations of the push-down storage, then \(p'\) will represent the whole set of configuration of the tree-stack storage. 
    If \(f=\mathtt{push}(\omega)\), then \(f'=\mathtt{push}_0(\star)\) and  \(q_2'= (\square_\omega,q_2)\).
    If \(f=\mathtt{pop}_\omega\), then \(f'=\mathtt{down}\) and \(q_2'= q_2\).
    \item For the state \((\square_\omega, q)\) we have the instruction \(((\square_\omega, q), \epsilon, C, \mathtt{push}_1(\omega), q)\).
    \item For every \(q \in Q\), we have the instruction: \((q,\epsilon, \mathtt{equals}(\star), \mathtt{down}, q)\).
\end{enumerate}
\end{rules}
\end{example}

We also include an application of this example to give a tree-stack automaton which recognises the word problem in $\ZZ$. 

\begin{example}
	Define a tree stack automaton as follows
	
	$Q = \{S, q_f, q_t, q_T\}$, 
	
	$\Sigma$ is the alphabet $\{t, T\}$. 
	
	$T$ is a tree stack with alphabet $\{t, T, \square\}$. 
	
	$S$ is the start state with empty stack as the start stack. 
	
	$Q_f = \{q_f\}$. 
	
	$\delta$ consists of the commands
	\begin{align*}
	&(S, t, \texttt{notequal}{(T)}, \texttt{push}_0(\square), q_t)\\
	&(S, \epsilon, C, \texttt{push}_1(t), S)\\
	&(S, T, \texttt{notequal}{(t)}, \texttt{push}_0(\square), q_T)\\
	&(S, \epsilon, C, \texttt{push}_1(T), S)\\
	&(S, \epsilon, \texttt{equal}{(\square)}, \texttt{down}, S)\\
	&(S, t, \texttt{equals}{(T)}, \texttt{down}, S)\\
	&(S, T, \texttt{equals}{(t)}, \texttt{down}, S)\\
	&(S, \epsilon, \texttt{notequal}{(\root)}, \mathrm{Id}, q_f)
	\end{align*}
	
	This automaton accepts word which contain an equal number of the letter $t$ and the letter $T$. This is the word problem in $\ZZ$. 
\end{example}

The key point in both these examples is the fact that there is no $\texttt{pop}$ command for tree-stack automaton. We mimic the \texttt{pop} command by pushing $\square$ once a $\square$ is returned to a branch of the tree is no longer accessible, essentially popping everything on that branch.

For some explicit examples of 2-restricted tree stack automata see Examples 3.2 and 3.3 in \cite{denkinger_automata_2016}.



\section{Closure under free products}

In this section we prove that the class of groups whose word problem is multiple context-free is closed under free products. To do this we will show that given $G_1$ and $G_2$ with multiple context-free word problem we can construct a tree-stack automaton which accepts the word problem for $G_1\ast G_2$.

\begin{lemma}\label{lem:emptytree}
	Let $\mathcal{M}$ be a tree-stack automaton accepting the language $M$. Then there exists a tree-stack automaton $\mathcal{M}'$ such that $L(\mathcal{M}') = L$ and $\mathcal{M}'$ accepts a non-empty word only if the tree-stack storage is in the state $(T, \epsilon)$ for some $\Omega$-tree $T$. 
\end{lemma}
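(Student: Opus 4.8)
The plan is to append a ``walk home'' phase to $\mathcal{M}$: whenever $\mathcal{M}$ would accept, we instead force the pointer to descend to the root by \texttt{down} moves before entering a single new accepting state. Concretely, write $\mathcal{M} = (Q, T, I, \delta)$, where $T$ is a tree-stack storage over some alphabet $\Omega$ and $I = (q_I, c_I, Q_F)$. Introduce two fresh states $q_c$ (``cleanup'') and $q_f'$ and set $\mathcal{M}' = (Q', T, I', \delta')$ with $Q' = Q \cup \{q_c, q_f'\}$, $I' = (q_I, c_I, \{q_f'\})$, so that $q_f'$ is the \emph{only} accepting state of $\mathcal{M}'$, and $\delta' = \delta \cup \delta_{\mathrm{new}}$, where $\delta_{\mathrm{new}}$ consists of the transition $(q, \epsilon, C, \mathrm{Id}, q_c)$ for every $q \in Q_F$, the self-loop $(q_c, \epsilon, \texttt{notequals}(\root), \texttt{down}, q_c)$, and the transition $(q_c, \epsilon, \texttt{equals}(\root), \mathrm{Id}, q_f')$. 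Each of $C$, $\texttt{notequals}(\root)$, $\texttt{equals}(\root)$ lies in the predicate set of a tree-stack storage and $\texttt{down}$ is among its instructions, with domain exactly $C - \texttt{equals}(\root)$, so $\mathcal{M}'$ is a legitimate tree-stack automaton.

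First I would check $M \subseteq L(\mathcal{M}')$. An accepting run of $\mathcal{M}$ on a word $w$ is a path in $\Gamma(\mathcal{M})$ from $(q_I, c_I)$ to some $(q, c)$ with $q \in Q_F$ and $c$ a configuration with pointer $p \in \NN^*$, whose sequence of labels is an $\epsilon$-expansion of $w$. Since $\Gamma(\mathcal{M})$ embeds into $\Gamma(\mathcal{M}')$, this run survives verbatim; I then extend it by the edge into $q_c$, by $|p|$ consecutive \texttt{down} edges — legal because an $\Omega$-tree has prefix-closed domain, so every non-root pointer has a parent — reaching a configuration with pointer $\epsilon$ at state $q_c$, and finally by the edge into $q_f'$. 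All appended edges carry the label $\epsilon$, so the label word is unchanged up to $\epsilon$-expansion; hence $w \in L(\mathcal{M}')$. When $w = \epsilon$ (so $p = \epsilon$) the same argument applies with no \texttt{down} steps, so the empty word is handled uniformly.

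For the converse $L(\mathcal{M}') \subseteq M$, note that $q_f'$ is the only accepting state, the only edge entering $q_f'$ leaves $q_c$, and the only edges leaving $q_c$ are the \texttt{down} self-loop and the edge into $q_f'$; in particular no edge of $\delta_{\mathrm{new}}$ re-enters $Q$. Hence any accepting run of $\mathcal{M}'$ on $w$ splits as a prefix that is a run of $\mathcal{M}$ ending at some $(q, c)$ with $q \in Q_F$, followed by a suffix of $\delta_{\mathrm{new}}$-edges, all labelled $\epsilon$. The prefix is then an accepting run of $\mathcal{M}$ whose label word is an $\epsilon$-expansion of $w$, so $w \in M$. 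Finally, the last edge of any accepting run of $\mathcal{M}'$ is guarded by $\texttt{equals}(\root)$, so its final configuration has pointer $\epsilon$; thus $\mathcal{M}'$ accepts \emph{every} word, a fortiori every non-empty word, only in a configuration of the form $(T, \epsilon)$, which is what was asked.

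The only point needing care is that the cleanup phase must not create words outside $M$; this is why $q_c$ is a fresh state carrying none of $\mathcal{M}$'s transitions and why the old final states are de-finalised. If instead one simply attached a \texttt{down} self-loop to each $q \in Q_F$ while keeping $Q_F$ accepting, a run could interleave genuine $\delta$-transitions with spurious \texttt{down} steps, read subsequent labels from shifted configurations, and accept a word not in $M$, so this naive variant fails. I would also record, for use in the later sections, that the construction preserves $k$-restrictedness and cycle-freeness: a \texttt{down} edge is never of the form $(q_1,(T_1,p)) \to (q_2,(T_2,pn))$, so it contributes nothing to the count in Definition \ref{def: k-restricted}, and since $q_c$ admits no \texttt{up} move and $q_f'$ has no outgoing edge, the edges of $\delta_{\mathrm{new}}$ cannot lie on a non-trivial loop of $\Gamma(\mathcal{M}')$.
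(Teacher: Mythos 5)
Your construction is correct and is essentially the same as the paper's: the paper likewise adds two fresh states, routes every old final state via an $\epsilon$-transition into a cleanup state that loops on \texttt{down} until the pointer reaches the root, and then makes a single new state the unique accept state. The only cosmetic difference is that you guard the \texttt{down} self-loop with $\texttt{notequals}(\root)$ where the paper uses the predicate $C$ (equivalent here, since the domain of \texttt{down} already excludes the root), and you spell out the correctness and the preservation of $k$-restrictedness and cycle-freeness in more detail than the paper does.
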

\begin{proof}
	We build a new automaton which accepts the same language as follows. 
	
	Add two extra states $q_f, \bar{q}_f$ to our automaton. We add the following transitions to $\delta$.  
	
	\begin{align*}
	&(q, \epsilon, C, \mathrm{Id}, q_f),  \forall q\in Q_F\\
	&(q_f, \epsilon, C, \texttt{down}, q_f)\\
	&(q_f, \epsilon, \texttt{equals}({\root}), \bar{q}_f)
	\end{align*}
	
	We change the set of accept states to $\{\bar{q}_f\}$.
	The language accepted by this new automaton is the same language as before. It should be noted that the new automaton has a single accept state and if $\mathcal{M}$ was cycle-free, then so is $\mathcal{M}'$. 
\end{proof}

It will also be useful to know that the amount of time spent at any vertex in the tree-stack is uniformly bounded.

\begin{Def}
	A {\em run} in a tree-stack automaton is a path in the graph realisation. This can be seen as a valid sequence of instructions. 
	
	An {\em accepted run} is a run which ends in an accept state.
\end{Def}

\begin{lemma}\label{uniform}
	If $M$ is a $k$-restricted cycle-free tree-stack automaton, then there is an $n$ such that, for each $p\in\NN^*$ and each path in the graph realisation of $\mathcal{M}$ starting at $(q_I, c_I)$, there are at most $n$ vertices in the run of the form $(q, (T, p))$, where $q$ and $T$ may vary. 
\end{lemma}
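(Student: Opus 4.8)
The plan is to view a run of $\mathcal M$ as the pointer of the tree-stack tracing a walk on the underlying $\Omega$-tree, and to split the count into two uniform estimates: how many times the pointer can \emph{re-enter} a given position $p$, and how long the run can dwell at $p$ during a single stay. Formally, partition the run into maximal subpaths during which the pointer stays at $p$; call these the \emph{visits} to $p$. Since every edge either keeps the pointer fixed ($\mathrm{Id}$, $\texttt{set}_\gamma$) or moves it to a tree-neighbour ($\texttt{push}_n$, $\texttt{up}_n$ to a child, $\texttt{down}$ to the parent), this notion is well defined, and the number of graph-realisation vertices of the form $(q,(T,p))$ in the run is at most (number of visits to $p$) times (maximum length of a visit). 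It therefore suffices to bound each factor by a constant depending only on $\mathcal M$.

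For the length of a visit, note that $\texttt{push}$, $\texttt{up}$ and $\texttt{down}$ all move the pointer off $p$, so within a single visit only $\mathrm{Id}$ and $\texttt{set}_\gamma$ are used; consequently the tree changes only at the position $p$, and any graph-realisation vertex occurring in the visit is completely determined by the current state together with the current label $T(p)\in\Omega\cup\{\root\}$. Hence at most $|Q|\,(|\Omega|+1)$ distinct graph-realisation vertices can occur in one visit. If a visit were longer, some vertex would repeat, producing a non-trivial loop in $\Gamma(\mathcal M)$ containing no $\texttt{push}$, $\texttt{up}$ or $\texttt{down}$ command, contradicting cycle-freeness. So every visit has length at most $L:=|Q|\,(|\Omega|+1)$.

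For the number of visits, observe that, apart from the single initial visit to $\epsilon$, every visit to $p$ begins with the pointer arriving at $p$ from a neighbour: either from the parent of $p$ via an $\texttt{up}$ or $\texttt{push}$ edge, or from a child $pn$ via a $\texttt{down}$ edge. Arrivals of the first kind are exactly the edges counted in Definition~\ref{def: k-restricted}, so there are at most $k$ of them. For the second kind, each time the pointer crosses the edge $\{p,pn\}$ downward it must earlier have crossed it upward (every excursion into the subtree below $pn$ must enter it before leaving), and the upward crossings are again edges of the type bounded by Definition~\ref{def: k-restricted}, hence at most $k$; so there are at most $k$ downward crossings of $\{p,pn\}$. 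Finally, since $\delta$ is finite, only finitely many instructions $\texttt{push}_n$ occur in it, so every tree node acquires at most $c$ children for a constant $c=c(\mathcal M)$; thus $p$ is entered from below at most $ck$ times in total. Altogether $p$ is visited at most $1+k+ck$ times, and we may take $n:=(1+k+ck)\,L$.

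The main obstacle is precisely the bound on the number of visits: the naive recursion ``visits to $p$ are controlled by entries from its children, and the number of children is controlled by the visits to $p$'' is circular and yields nothing once $k\ge 1$. What breaks the circularity is the finiteness of the transition set $\delta$, which caps the set of child-labels $n$ that can ever be pushed and hence bounds the out-degree of every tree node independently of the run. The remaining ingredients — that on a tree downward edge-crossings are dominated by upward ones, and that cycle-freeness forbids long dwell times — are then routine, and assembling the two bounds gives the claimed uniform $n$.
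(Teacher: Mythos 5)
Your proof is correct and follows essentially the same route as the paper's: entries to $p$ from the parent are bounded by $k$ via the $k$-restriction, entries from a child $pm$ are dominated by the (at most $k$) prior upward crossings to $pm$ with the set of possible labels $m$ bounded by the finitely many \texttt{push} instructions in $\delta$, and cycle-freeness caps the dwell time at $p$. You simply make explicit (via the "visit" decomposition and the $|Q|(|\Omega|+1)$ bound) what the paper only gestures at with its "length of the longest path with no movement in the tree" factor.
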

\begin{proof}
	Consider the two possibilities for entering a vertex of the form $(q, (T, p))$, where $p$ is fixed and $q$ and $T$ may vary. Either we have an edge $(q_1, (T_2, pm))\to (q, (T, p))$ or $(q_2, (T_2, \bar{p}))\to (q, (T, p))$, where $\bar{p}l = p$ for some $l$. There are only $k$ possibilities of the second instance since the automaton is $k$-restricted. 
	
	In the first instance, there must have been an edge of the form $(q', (T', p))\to (q'', (T'', pm))$ previosuly in the path. There are at most $k$ such edges by $k$-restrictedness. Since $\delta$ is finite there can only be a finite number of instructions that contain a \texttt{push} command. Therefore, there are a bounded number of choices for $m$. 
	
	We will not require the exact bound, however, it can be calculated. A good estimate is $k\times$(number of \texttt{push} commands)$\times$(length of the longest path in the automaton with no movement in the tree).  
\end{proof}

Let $G_1, G_2$ be groups with multiple context-free word problem, we now create the automaton which will accept the word problem for $G_1\ast G_2$. 
Ideally, one would like to take the ``free product'' of the automata. However, this will result in something infinite. The key idea is to do this at the level of the tree-stack storage only.

\begin{theorem}\label{thm:free_product}
	If $G_1$ and $G_2$ are groups with multiple context-free word problem, then $G_1\ast G_2$ has multiple context-free word problem.
\end{theorem}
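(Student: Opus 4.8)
The plan is to build, from $k$-restricted tree-stack automata $\mathcal{M}_1, \mathcal{M}_2$ accepting the word problems of $G_1, G_2$ respectively, a single $k$-restricted tree-stack automaton $\mathcal{N}$ accepting the word problem of $G_1 \ast G_2$. The conceptual picture is that an element of $G_1 \ast G_2$ equals the identity iff, after freely reducing the syllable decomposition, every syllable cancels; a naive approach would run $\mathcal{M}_1$ on the $G_1$-syllables and $\mathcal{M}_2$ on the $G_2$-syllables, but the syllables are interleaved and nested, so we need a stack-like mechanism to suspend one computation while a nested sub-word (which must itself be trivial in the other factor, or rather in the appropriate factor) is processed. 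The key idea, as the authors hint, is to realise this nesting \emph{inside the tree-stack storage}: when we pass from a syllable in $G_i$ to a syllable in $G_j$ ($j \neq i$), we move the pointer to a fresh child of the current tree-stack node and begin a computation of $\mathcal{M}_j$ there, carrying the state of $\mathcal{M}_i$ along in the finite-state control (via a product/stack-of-states construction on $Q$, which is fine since $Q$ is finite and the nesting depth is unbounded but recorded in the tree, not the states — here I would need the finite control only to remember the \emph{current} factor's state plus a bounded amount of bookkeeping).

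Concretely, first I would, using Lemma~\ref{lem:emptytree}, assume $\mathcal{M}_1$ and $\mathcal{M}_2$ each accept a nonempty word only with the pointer at the root, and using the cycle-free lemma assume they are cycle-free. The alphabet of $\mathcal{N}$ is $\Sigma_1 \sqcup \Sigma_2$ (generators of $G_1$ and $G_2$). The tree-stack alphabet of $\mathcal{N}$ will be a disjoint union of the tree-stack alphabets $\Omega_1, \Omega_2$ of $\mathcal{M}_1, \mathcal{M}_2$ together with a marker indicating which factor "owns" a subtree. The states of $\mathcal{N}$ record: which factor we are currently reading in, the current state of that factor's automaton, and enough to manage the transition moments. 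Transitions of $\mathcal{N}$ are of three kinds: (i) simulate a transition of $\mathcal{M}_i$ in the current factor, acting on the part of the tree below the current "ownership root"; (ii) when reading a letter of the \emph{other} alphabet $\Sigma_j$, first verify that the current $\mathcal{M}_i$-computation can be legitimately suspended — this is where cycle-freeness and the "accept only at root of its own subtree" normalisation matter — by requiring that the $\mathcal{M}_i$-pointer is back at its local root and in a state from which the empty continuation is consistent, then \texttt{push} a new child node tagged for factor $j$, move the pointer there, reset to $\mathcal{M}_j$'s start; (iii) when a nested $\mathcal{M}_j$-computation finishes (reaches an accept configuration of $\mathcal{M}_j$ at its local root), issue \texttt{down} to return ownership to the suspended $\mathcal{M}_i$-computation and resume. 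Acceptance of $\mathcal{N}$ requires the outermost computation to accept at the global root.

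The correctness argument has two directions. For soundness (every word accepted by $\mathcal{N}$ is trivial in $G_1 \ast G_2$): an accepting run of $\mathcal{N}$ decomposes along the \texttt{push}/\texttt{down} moves into a tree of sub-runs, each of which is an accepting run of $\mathcal{M}_1$ or $\mathcal{M}_2$ on the corresponding maximal syllable-block; since each such block is thereby trivial in the respective factor, collapsing innermost trivial blocks repeatedly (the normal-form / van der Waerden argument for free products) shows the whole word is trivial. For completeness: given a word $w$ trivial in $G_1 \ast G_2$, take its reduced syllable structure; triviality forces a nesting of cancelling blocks, each trivial in a single factor, so we can assemble the corresponding accepting sub-runs of $\mathcal{M}_1, \mathcal{M}_2$ into an accepting run of $\mathcal{N}$. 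Finally I must check that $\mathcal{N}$ is $k$-restricted: every tree-stack node of $\mathcal{N}$ is a node of exactly one of the embedded $\mathcal{M}_i$-subtrees, plus possibly the ownership markers; a node is entered from below (an \texttt{up} into it, or the \texttt{push} creating it) at most as often as in $\mathcal{M}_i$ — and crucially, once a \texttt{down} leaves an $\mathcal{M}_i$-subtree node to return to a suspended outer computation, the cycle-free + normalised structure guarantees we never re-enter that subtree — so the count stays $\le k$.

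I expect the main obstacle to be exactly the bookkeeping that makes the $k$-restrictedness survive: one must ensure that suspending an $\mathcal{M}_i$-computation to handle a nested block, and later resuming it, does not cause any tree-stack vertex to be entered from below more than $k$ times in total. This forces a careful design of \emph{where} the new child is pushed (it must hang off the \emph{current} $\mathcal{M}_i$-pointer node, so that resuming $\mathcal{M}_i$ sees exactly the configuration it left) and forces us to rule out, via cycle-freeness and Lemma~\ref{uniform}, the pathological re-entry behaviour flagged in the discussion after the tree-stack definitions (the \texttt{pop}-then-\texttt{push} loop). A secondary subtlety is handling syllables that are themselves trivial in their factor but appear in the middle of a reduced word — these correspond to "no-op" nested blocks and must be permitted by the automaton without breaking the restriction count; the normalisation from Lemma~\ref{lem:emptytree} is what lets us treat them uniformly.
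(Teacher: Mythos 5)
Your overall architecture is the same as the paper's: simulate $\mathcal{M}_1$ and $\mathcal{M}_2$ on disjoint subtrees of a single tree-stack, open a marked child when the input switches factors, close it with a \texttt{down} when the nested computation accepts at its local root, prove soundness by peeling off maximal one-coloured subtrees and completeness by induction on free product length, and get $k$-restrictedness because the glue transitions add no \texttt{up} commands. There are, however, two concrete problems with your transition design. First, in (ii) you only permit suspending the current $\mathcal{M}_i$-computation when its pointer ``is back at its local root and in a state from which the empty continuation is consistent.'' That is the right condition for \emph{closing} a nested block, but as a precondition for \emph{opening} one it is fatal: take $w=aba^{-1}$ with $a\in\Sigma_1$ a generator of infinite order and $b\in\Sigma_2$ trivial in $G_2$. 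This $w$ is trivial in $G_1\ast G_2$, yet after reading the prefix $a\notin W_1$ no run of $\mathcal{M}_1$ can be in (or be required to reach) an accepting configuration, so under your rule no run may suspend and $w$ is rejected. The fix --- which you half-state in your final paragraph, contradicting (ii) --- is the paper's: suspend wherever you are, \texttt{push} the new child off the \emph{current} pointer, and record the suspended state $q$ in the \emph{label} of that child. Your ``marker'' must carry $q$: since your finite control remembers only the current factor's state, the label read by the predicate when the closing \texttt{down} fires is the only place from which $q$ can be recovered.

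Second, you never address the fact that $\texttt{push}_m$ is a partial function, undefined when a branch labelled $m$ already exists at the current vertex. By Lemma \ref{uniform} a run of $\mathcal{M}_i$ may revisit the same tree-stack vertex up to $n$ times, and each visit may require opening a fresh nested block hanging off that vertex; with a single reserved branch index the second such \texttt{push} is simply unavailable and completeness fails again. The paper reserves $n$ distinct (negative) branch indices for the glue pushes, with $n$ the bound from Lemma \ref{uniform}; that is where the lemma is actually needed, rather than (as you suggest) to rule out re-entry into a closed subtree, which already follows from the absence of \texttt{up} commands on the reserved indices.
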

\begin{proof}
	Let $W_i$ be the word problem in $G_i$ and $W$ be the word problem in $G_1\ast G_2$. Let $\mathcal{M}_i = (Q_i, T_i, I_i, \delta_i)$, where $T_i$ is a tree-stack storage over the alphabet $\Omega_i$ and $I_i = (q_I^i, c_I^i, Q_F^i = \{q_f^i\})$ be an automaton recognising the language $W_i$. 
	
	We will assume that these automata are $k$-restricted, cycle-free and accept a word if and only if the stack pointer is at the root. Let $n$ be the maximum of the two bounds obtained from Lemma \ref{uniform} applied to $\mathcal{M}_1$ and $\mathcal{M}_2$.

	We now define the automaton $\mathcal{M}$ that will recognize the language $W$.
	
	The states of $\mathcal{M}$ are $Q = Q_1\sqcup Q_2\sqcup\{S, F\}$, the storage type $T$ is the set of tree-stacks on the alphabet $\Omega = \Omega_1\sqcup\Omega_2\sqcup(Q\times\{\square_1, \square_2\})$. 
	The initial state is $S$, with empty initial tree and the final state is $F$. The transitions are $\delta = \delta_1'\sqcup\delta_2'\sqcup \delta_3$, where each set will be described shortly. Intuitively, the set \(\delta_3\) regulates the transitions between the two original automata, and we will obtain \(\delta_i'\) from \(\delta_i\) by substituting each instruction in \(\delta_i\) that contains a \(\root\) symbol with a finite set of instructions, one for each state of \(Q_i\). More precisely \(\delta_i' = (\delta_i\smallsetminus (\mathcal{D}_=\cup\mathcal{D}_{\neq}))\cup \mathcal{S}_=\cup\mathcal{S}_{\neq}\) where,
	\begin{rules}
		\item $\mathcal{D}_= =  \{(q_1, \sigma, \texttt{equals}(\root), f, q_2)\}$
		\item $\mathcal{D}_{\neq} = \{(q_1, \sigma, \texttt{notequals}(\root), f, q_2)\}$
		\item $\mathcal{S}_= =  \{(q_1, \sigma, \texttt{equals}((q, \square_i)), f, q_2)\mid (q_1, \sigma, \texttt{equals}(\root), f, q_2)\in \mathcal{D}_=, q\in Q \}$
		\item $\mathcal{S}_{\neq}  = \{(q_1, \sigma, \texttt{notequals}((q, \square_i)), f, q_2)\mid (q_1, \sigma, \texttt{notequals}(\root), f, q_2)\in \mathcal{D}_{\neq}, q\in Q \},$
	\end{rules} { and }
	\begin{align*}
	\delta_3 = \{&(S, \epsilon, \texttt{equals}(\root), \mathrm{Id}, F), \\
				&(S, \epsilon, C, \texttt{push}_1((S, \square_1)), q_I^1), 
				(S, \epsilon, C, \texttt{push}_2((S, \square_2)), q_I^2), \\
				&(q_f^1, \epsilon, \texttt{equals}((S, \square_1)), \texttt{down}, S), 
				(q_f^2, \epsilon, \texttt{equals}((S, \square_2)), \texttt{down}, S)\}
				\cup\\
				\{&(q, \epsilon, \mathtt{notequals}(Q_1\times\{\square_1\}), \mathtt{push}_i((q, \square_2)), q_I^2)\mid
				, q'\in Q_2, i\in \{-1, \dots, -n\}\}\cup\\
				\{&(q, \epsilon, \mathtt{notequals}(Q_2\times\{\square_2\}), \mathtt{push}_i((q, \square_1)), q_I^1)\mid q'\in Q_1,  i\in \{-1, \dots, -n\}\}\cup\\
				\{&(q_f^1, \epsilon, \mathtt{equals}((q', \square_1)), \mathtt{down}, q')\mid  q'\in Q_2\}\cup\\
				\{&(q_f^2, \epsilon, \mathtt{equals}((q', \square_2)), \mathtt{down}, q')\mid  q'\in Q_1\}.
	\end{align*}

	\begin{figure}
		\center
		\begin{tikzpicture}
		[line cap=round,line join=round,>=triangle 45,x=3.5cm,y=3.5cm]
		\draw [line width=1.2pt] (-1.5,1) circle (0.5);
		\draw [line width=1.2pt] (1.5,1) circle (0.5);
		\draw [line width=1.2pt] (0,-0.2) circle (0.3);
		\draw [line width=1.2pt] (0,-1.15) circle (0.3);
		\draw [->,line width=1.2pt] (0,-0.5) -- (0,-0.85);

		\def\myshift#1{\raisebox{-2.5ex}}
		\draw [->,line width=1.2pt, 
		postaction={decorate, decoration={text along path, text align=center, text={|\myshift|{$(q_f^1, \epsilon, \texttt{equals}((S, \square_1)), \texttt{down}, S)$}}}}]  (-1.6461074688950164,0.5218236648127683) -- (-0.21344987074194324,-0.4108059597835121);

		\def\myshift#1{\raisebox{1.5ex}}
		\draw [<-,line width=1.2pt, 
		postaction={decorate, decoration={text along path, text align=center, text={|\myshift|
					{$(S, \epsilon, C, \texttt{push}_1((S, \square_1)), q_I^1)$}}}}
		] (-1.3518098702909491,0.5224649903338869) -- (-0.2998502187655847,-0.19052127085359105);

		\def\myshift#1{\raisebox{-2.5ex}}
		\draw [<-,line width=1.2pt, 
		postaction={decorate, decoration={text along path, text align=center, text={|\myshift|{$(q_f^2, \epsilon, \texttt{equals}((S, \square_2)), \texttt{down}, S)$}}}}
		] (0.2084256897656984,-0.4157747247610175) -- (1.7423198889926752,0.5626430846567332);

		\def\myshift#1{\raisebox{1.5ex}}
		\draw [->,line width=1.2pt, 
		postaction={decorate, decoration={text along path, text align=center, text={|\myshift|
					{$(S, \epsilon, C, \texttt{push}_2((S, \square_2)), q_I^2)$\quad}}}}
		]  (0.29739645234462686,-0.16056207240700826) -- (1.3824773286814853,0.514007796640563);

		\def\myshift#1{\raisebox{-2.5ex}}
		\draw [->, shift={(0,4.227813824947694)},line width=1.2pt, 
		postaction={decorate,decoration={text along path,text align=center,text={|\myshift|
					{$($}{$q_f^1$}{, }{$\epsilon$}{, }{\texttt{e}}{\texttt{q}}{\texttt{u}}{\texttt{a}}{\texttt{l}}{\texttt{s}}{$($}{$($}{$q'$}{, }{$\square_1$}{))}{, }{\texttt{d}}{\texttt{o}}{\texttt{w}}{\texttt{n}}{, }{$q'$}{) }{ }
		} } } ]  plot[domain=4.402340010083532:5.0224379506858465,variable=\t]({1*3.762152015453168*cos(\t r)+0*3.762152015453168*sin(\t r)},{0*3.762152015453168*cos(\t r)+1*3.762152015453168*sin(\t r)});

		\def\myshift#1{\raisebox{1.5ex}}
		\draw [<-, shift={(0,4.106118077870352)},line width=1.2pt, 
		postaction={decorate,decoration={text along path,text align=center,text={|\myshift|
					{$($}{$q$}{, }{$\epsilon$}{, }{\texttt{n}}{\texttt{o}}{\texttt{t}}{\texttt{e}}{\texttt{q}}{\texttt{u}}{\texttt{a}}{\texttt{l}}{\texttt{s}}{$($}{$($}{$q'$}{, }{$\square_2$}{))}{, }{\texttt{p}}{\texttt{u}}{\texttt{s}}{\texttt{h}}{$_i$}{(}{(}{$q$}{, }{$\square_1$}{)}{)}{, }{$q_I^1$}{)}
		} } } ]  plot[domain=4.4079423576393015:5.016835603130078,variable=\t]({1*3.403602230598923*cos(\t r)+0*3.403602230598923*sin(\t r)},{0*3.403602230598923*cos(\t r)+1*3.403602230598923*sin(\t r)});

		\def\myshift#1{\raisebox{-2.5ex}}
		\draw [->, shift={(0,-1.15)},line width=1.2pt,
		postaction={decorate,decoration={text along path,text align=center,text={|\myshift|
					{$($}{$q$}{, }{$\epsilon$}{, }{\texttt{n}}{\texttt{o}}{\texttt{t}}{\texttt{e}}{\texttt{q}}{\texttt{u}}{\texttt{a}}{\texttt{l}}{\texttt{s}}{$($}{$($}{$q'$}{, }{$\square_1$}{))}{, }{\texttt{p}}{\texttt{u}}{\texttt{s}}{\texttt{h}}{$_i$}{(}{(}{$q$}{, }{$\square_2$}{)}{)}{, }{$q_I^2$}{) }{ }
		} } } ]  plot[domain=1.9882313635053133:1.15336129008448,variable=\t]({1*2.594693225303058*cos(\t r)+0*2.594693225303058*sin(\t r)},{0*2.594693225303058*cos(\t r)+1*2.594693225303058*sin(\t r)});

		\def\myshift#1{\raisebox{1.5ex}}
		\draw [<-, shift={(0,-1.15)},line width=1.2pt, postaction={decorate,decoration={text along path,text align=center,text={|\myshift|
					{$($}{$q_f^1$}{, }{$\epsilon$}{, }{\texttt{e}}{\texttt{q}}{\texttt{u}}{\texttt{a}}{\texttt{l}}{\texttt{s}}{$($}{$($}{$q'$}{, }{$\square_1$}{))}{, }{\texttt{d}}{\texttt{o}}{\texttt{w}}{\texttt{n}}{, }{$q'$}{)}
		} } } ]  
		
		plot[domain=2.0078378898065155:1.1337547637832779,variable=\t]
		({1*2.802804589490702*cos(\t r)+0*2.802804589490702*sin(\t r)},{0*2.802804589490702*cos(\t r)+1*2.802804589490702*sin(\t r)});

		\draw[color=black] (-0,-1.15) node {$F$};
		\draw (0,-0.2) node {$S$};
		\draw (-1.5,1) node {$\mathcal{M}_1$};
		\draw (1.5,1) node {$\mathcal{M}_2$};
		\draw (0.55,-0.65) node {$(S, \epsilon, \texttt{equals}(\root), \mathrm{Id}, F)$};
		
		\end{tikzpicture}
		\caption{A depiction of the automata accepting the word problem of $G_1\ast G_2$}
		\label{automaton}
	\end{figure}
	
	\begin{figure}
		\center
		\setlength{\unitlength}{300pt}
		\begin{picture}(1,0.53298141)%
		\put(0,0){\includegraphics[width=\unitlength]{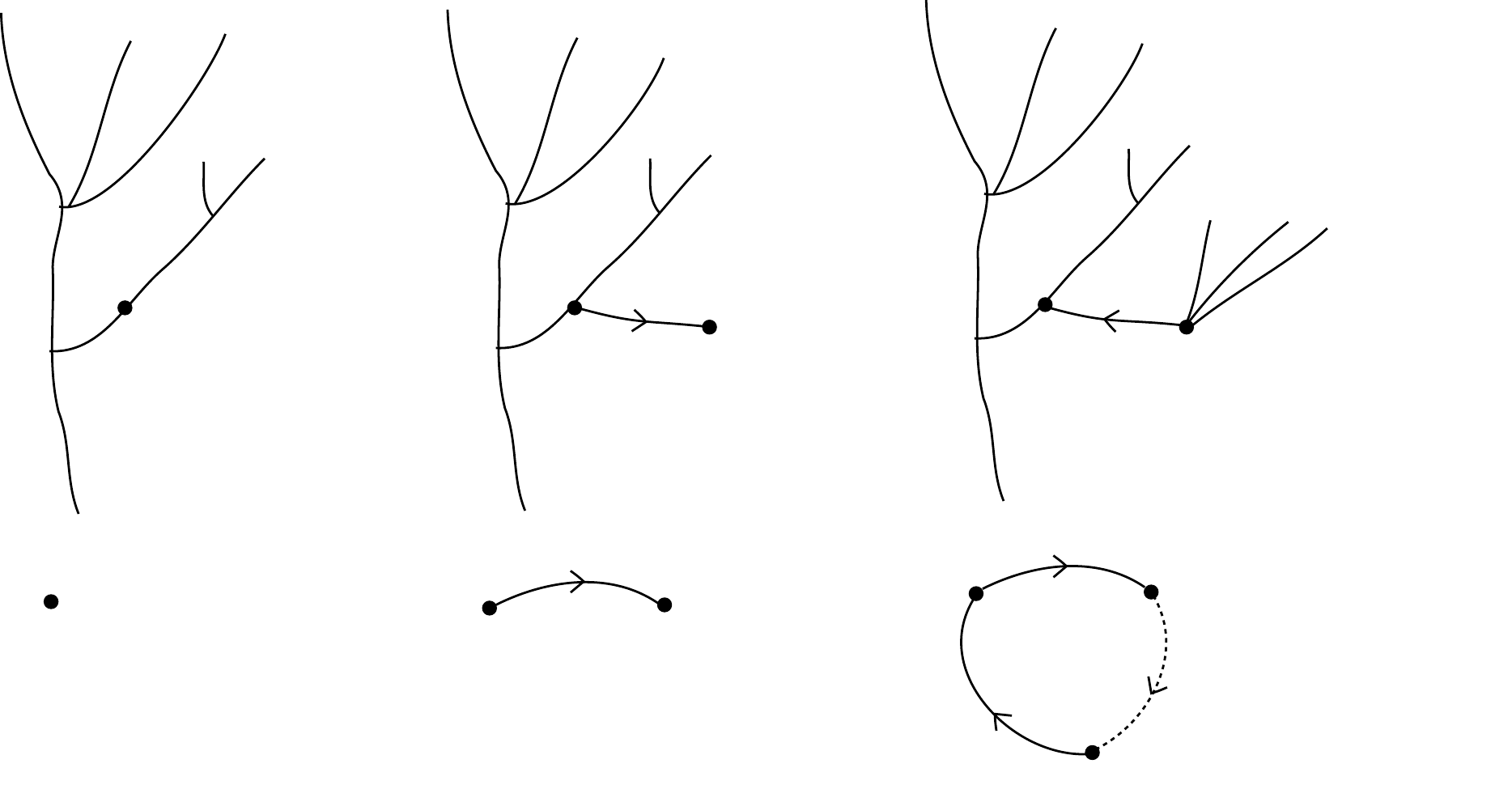}}%
		\put(0.05290225,0.11580525){\color[rgb]{0,0,0}\makebox(0,0)[lb]{\smash{$q_1$}}}%
		\put(0.32377827,0.1019959){\color[rgb]{0,0,0}\makebox(0,0)[lb]{\smash{$q_1$}}}%
		\put(0.64351822,0.10624489){\color[rgb]{0,0,0}\makebox(0,0)[lb]{\smash{$q_1$}}}%
		\put(0.43425319,0.10093362){\color[rgb]{0,0,0}\makebox(0,0)[lb]{\smash{$q_I^2$}}}%
		\put(0.76558192,0.1561631){\color[rgb]{0,0,0}\makebox(0,0)[lb]{\smash{$q_I^2$}}}%
		\put(0.0946912,0.30552472){\color[rgb]{0,0,0}\makebox(0,0)[lb]{\smash{$\gamma$}}}%
		\put(0.69216969,0.29362742){\color[rgb]{0,0,0}\makebox(0,0)[lb]{\smash{$\gamma$}}}%
		\put(0.37391691,0.29511458){\color[rgb]{0,0,0}\makebox(0,0)[lb]{\smash{$\gamma$}}}%
		\put(0.4735568,0.28321728){\color[rgb]{0,0,0}\makebox(0,0)[lb]{\smash{$(q_1, \square_2)$}}}%
		\put(0.79329675,0.28173012){\color[rgb]{0,0,0}\makebox(0,0)[lb]{\smash{$(q_1, \square_2)$}}}%
		\put(0.20649636,0.36505125){\color[rgb]{0,0,0}\makebox(0,0)[lb]{\smash{$\to$}}}%
		\put(0.5244283,0.37520624){\color[rgb]{0,0,0}\makebox(0,0)[lb]{\smash{$\to$}}}%
		\put(0.72982811,0.00526098){\color[rgb]{0,0,0}\makebox(0,0)[lb]{\smash{$q_f^2$}}}%
		\end{picture}%
		\caption{The process of opening a new tree and returning to an old tree once the word is accepted.}
		\label{treestackfig}
		
	\end{figure}
	
	The reader should note that tree-stacks were defined with $\NN$ and negative labels have been used above. One should note that $\ZZ$ is countable so the labels can be made positive. 

	The automaton above is $k$-restricted since the commands in $\delta_3$ do not add any $\texttt{up}_n$ commands and all such commands come from the automata $\mathcal{M}_i$ which are $k$-restricted.

	We want to show that $W = L(\mathcal{M})$.
	
	The way the automaton above works is as follows. We start with our word and move to one of the automata $\mathcal{M}_1$ or $\mathcal{M}_2$, say $\mathcal{M}_1$. We then read a word in $\Sigma_1$ and move in this automaton as usual. When we come to a letter from $\Sigma_2$ we move to the automaton $\mathcal{M}_2$ recording the state $q\in Q_1$ where we left $\mathcal{M}_1$ and opening a new branch on the tree. Later we will read a letter of $\Sigma_1$, if we do this at the final state of $\mathcal{M}_2$ then we move back to $q$, otherwise we open a new branch and move to $q_I^1$ and continue this process. 
	
	An accepted run $\Lambda$ of the automaton will have the pointer start and end at the root of the tree-stack. Let $T_f$ be the final tree-stack for the run. We can colour the non-root vertices of $T_f$ red and blue as follows. Colour a vertex red if the label is from $\Omega_1\cup (Q_2\times\{\square_1\})$ and blue otherwise. Note that after each instruction there is a tree-stack which embeds, as a graph, into $T_f$. Since the only \texttt{set} commands are to be found in $\delta_1'$ and $\delta_2'$, one could colour a vertex upon creation, the above embedding will then be colour preserving. 
	
	There is a subtree $T_c\subset T_f$ of a single colour whose complement is connected.

	For each instruction there are two possible pointers, these can be viewed as vertices of $T_f$. Let $\Theta$ be the instructions in $\Lambda$ such that both pointers are in $T_c$. We claim that all the elements of $\Theta$ are consecutive. This is because there are no $\texttt{up}$ commands with negative labels, so once we leave $T_c$ there is no way to return. Note that \(\Theta\) start at the initial state of one of the automaton and ends at the corresponding final state. In particular, it can be viewed as an accepted run in \(\mathcal{M}_i\) and the subword \(v\) of the run \(\Lambda\) associated to \(\Theta\) is an element of $W_i$. 
	
	Using the above,  $\Lambda$ decomposes as $\Lambda_1\theta_1\Theta\theta_2\Lambda_2$, where $\theta_1\in \delta_3$ is an instruction containing a $\texttt{push}$ command and $\theta_2\in\delta_3$ is an instruction containing a $\texttt{down}$ command. 
	However, to leave the tree \(T_c\), \(\theta_1\) and \(\theta_2\) pair up, by which we mean that the state of the automaton and the pointer before \(\theta_1\) and after \(\theta_2\) are the same. Also, the tree-stacks outside \(T_c\) remains unchanged.
	Thus, \(\Lambda_1 \Lambda_2\) is an accepted run of $\mathcal{M}$. As a consequence, we have that the word \(w\) corresponding to the run \(\Lambda\) decomposes as \(w_1 v w_2\), where  \(v\) is an element of \(W_i\) and \(w_1w_2\) is accepted by \(\mathcal{M}\). 
	By considering words that are trivial in $G_1\ast G_2$, we have that if \(w_1w_2\) is an element of $W$, then so is \(w\). 

	For the base case, note that if \(T_c=T_f\), then $w\in W_i$. 
	Thus by induction on the number of maximal one-colored subtrees, \(L(\mathcal{M})\) is a subset of $W$.

	For the other direction, we will use induction on the free product length of the word $w\in W$. The {\em free product length} of $w$ is the $p$ such that $w = w_1w_2\dots w_p$ and if $w_i\in W_j$, then $w_{i+1}\notin W_j$. 

	It is clear that words of free product length 1 are in the language $L(\mathcal{M})$.
	
	If $w = w_1\dots w_p$ has free product length $p$ and is an element of $W$, then there is an $i$ such that $w_i$ is an element of $W_j$. We will assume that $w_i\in W_1$. The run the machine will take is as follows, make the run for the word $w_1\dots w_{i-1}w_{i+1}\dots w_p$, which exists by induction hypothesis. At the point where the word $w_i$ is read we will open a new tree and move to the automaton $\mathcal{M}_1$ following a run for this word. 
	
	This run will finish at the root of the new tree and then return to the automaton $\mathcal{M}_2$ to continue the run where it left off. 
	
	To make sure that we can do this process we have to be able to $\mathtt{push}$ a new edge at the correct moment. This may not be possible if we have already pushed $n$ edges at this vertex. However, we assumed that the automaton $\mathcal{M}_1$ can only spend a uniformly bounded amount of time at any vertex and we added more $\mathtt{push}$ commands than this bound. Thus, there will always be a run for the word $w_1\dots w_{i-1}w_{i+1}\dots w_p$, where we can make a $\mathtt{push}$ at the desired moment. 
\end{proof}

In fact in the proof we have shown a slightly stronger result. 

\begin{corollary}
	If $G_1$ and $G_2$ are groups whose word problem is $k$-MCF, then the word problem in $G_1\ast G_2$ is $k$-MCF.
\end{corollary}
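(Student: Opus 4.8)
The plan is to revisit the proof of Theorem~\ref{thm:free_product} and keep track of the dimension parameter. The dimension enters only through the building blocks: since $W_1$ and $W_2$ are $k$-MCF, the equivalence of \cite{denkinger_automata_2016} between $k$-MCFGs and $k$-restricted tree-stack automata lets us take each $\mathcal{M}_i$ to be a $k$-restricted tree-stack automaton, and the preliminary normalisations used in that proof — making $\mathcal{M}_i$ cycle-free and, via Lemma~\ref{lem:emptytree}, accepting only with the pointer at the root — do not increase the restriction parameter. Running the construction verbatim yields the automaton $\mathcal{M}$ recognising the word problem $W$ of $G_1\ast G_2$. It then suffices to verify that $\mathcal{M}$ is itself $k$-restricted, for then the same equivalence, applied in the reverse direction, shows $W$ is $k$-MCF.

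To check $k$-restrictedness, fix $p\in\NN^*$, a label $n$, and a path $\Lambda$ in $\Gamma(\mathcal{M})$ from the start vertex; we must bound the number of edges $(q_1,(T_1,p))\to(q_2,(T_2,pn))$ along $\Lambda$. Such an edge arises from a $\texttt{push}_n$ or an $\texttt{up}_n$ command. The transitions in $\delta_3$ contain no $\texttt{up}_n$ command (only $\texttt{down}$ and $\texttt{push}$ commands, the negative labels being harmless since $\ZZ$ may be recoded in $\NN$), and any single $\texttt{push}_n$ command can fire at most once at the address $p$ along $\Lambda$, because once it fires $pn$ belongs permanently to the domain of the tree-stack while $\texttt{push}_n$ requires $pn$ to be absent. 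So if $pn$ is the root of one of the subtrees glued in by $\delta_3$, at most one such edge occurs and we are done as $k\ge 1$. Otherwise $p$ and $pn$ lie in the same maximal monochromatic subtree of the final tree-stack, which by the analysis in the proof of Theorem~\ref{thm:free_product} is a copy of the tree-stack of a run of some $\mathcal{M}_i$; restricting $\Lambda$ to the instructions executed with the pointer inside this subtree gives a genuine run of $\mathcal{M}_i$, since the excursions into deeper glued subtrees return the pointer unchanged. As $\mathcal{M}_i$ is $k$-restricted, at most $k$ such edges occur in this sub-run, hence in $\Lambda$.

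The argument is essentially bookkeeping, and the step requiring the most care is the last one: confirming that the restriction of a run of $\mathcal{M}$ to a single glued copy of an $\mathcal{M}_i$-tree is a valid run of $\mathcal{M}_i$, so that the $k$-restrictedness hypothesis on $\mathcal{M}_i$ genuinely applies. This is the observation already isolated in the proof of Theorem~\ref{thm:free_product} — that the sub-run $\Theta$ associated to a maximal monochromatic subtree is an (accepted) run of some $\mathcal{M}_i$ — now applied to every such glued subtree rather than only the outermost one.
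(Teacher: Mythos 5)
Your proposal is correct and follows essentially the same route as the paper: the only \texttt{up} (and within-subtree \texttt{push}) commands live in $\delta_1'\cup\delta_2'$, the $\delta_3$ pushes contribute at most one edge per new branch, and the restriction on a monochromatic subtree is inherited from the corresponding run of $\mathcal{M}_i$. Your version is more detailed than the paper's (which asserts the inheritance in a few lines), but the underlying argument is the same.
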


\begin{proof}
	It is clear from the proof of Theorem \ref{thm:free_product} that the automaton constructed is \(\mathrm{max}\{k_1, k_2\}\)-restricted. 
	Indeed, all the instruction that contains commands \(\texttt{up}\) are contained in \(\delta'_1 \cup \delta'_2\). Applying instructions contained in \(\delta'_i\) will not move the pointer to a vertex of a different colour (where the colouring is defined as in the proof of Theorem \ref{thm:free_product}). Thus, if a vertex is contained in the interior of a one-colored subtree, say the colour corresponding to \(W_1\), then that vertex will satisfy the \(k_1\)-restriction condition. 
\end{proof}

\section{Amalgamated Free Products}

In this section we generalize the previous result to show that the class of groups with multiple context-free word problem is closed under amalgamation over finite subgroups. 

The idea is similar to the previous proof, there are however more details. We feel that the interested reader should understand the proof of Theorem \ref{thm:free_product} which encapsulates most of the details in an easier setting. The key idea is the following:

\begin{prop}\label{prop:finitesubgroup}
	Let $G$ be a group with multiple context-free word problem. Let $H$ be a finite subset of $G$. Then $\{w\in\Sigma^*\mid w$ represents an element of $H\}$ is a multiple context-free language.
\end{prop}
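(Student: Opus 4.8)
The plan is to reduce the statement to the fact that the word problem $W$ of $G$ is multiple context-free, together with the closure properties of the class of MCFLs recorded in the introduction.

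First I would fix, for each $h\in H$, a word $u_h$ over the (symmetric) generating set $\Sigma$ that represents $h$, and write $u_h^{-1}$ for the formal inverse word. For $w\in\Sigma^*$ representing $\overline{w}\in G$ we have $\overline{w}\in H$ if and only if $\overline{w}\,\overline{u_h}^{-1}=1$ for some $h\in H$, i.e.\ if and only if $w\,u_h^{-1}\in W$ for some $h$. Hence the language in question equals
\[
L_H \;=\; \bigcup_{h\in H}\bigl\{\, w\in\Sigma^*\mid w\,u_h^{-1}\in W \,\bigr\},
\]
which is a finite union of right quotients of the MCFL $W$ by fixed words. So it suffices to establish (i) that the class of MCFLs is closed under finite unions, and (ii) that for a fixed word $v\in\Sigma^*$ the right quotient $W/v:=\{\,w\mid wv\in W\,\}$ is MCF.

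For (i) I would argue directly on grammars: given MCFGs for $L_1$ and $L_2$ with disjoint non-terminal sets, take their disjoint union, adjoin a fresh start symbol $S$ of dimension $1$, and add the two rules $S\to f_{x_1}(S_i)$, where $f_{x_1}$ is the identity linear rewriting function; the resulting grammar generates $L_1\cup L_2$, and it is $k$-MCF if both inputs are. For (ii) I would use that MCFLs form a cone, i.e.\ are closed under homomorphism, inverse homomorphism, and intersection with regular languages. Let $\Sigma'$ be a disjoint primed copy of $\Sigma$, set $\Gamma=\Sigma\sqcup\Sigma'$, and let $g\colon\Gamma^*\to\Sigma^*$ be the homomorphism that is the identity on $\Sigma$ and sends $a'\mapsto a$. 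Writing $v'$ for the primed copy of $v$, the language $g^{-1}(W)\cap(\Sigma^*v')$ is MCF (inverse homomorphism, followed by intersection with the regular language $\Sigma^*v'$) and equals $\{\,wv'\mid w\in\Sigma^*,\ wv\in W\,\}$; applying the homomorphism $\pi\colon\Gamma^*\to\Sigma^*$ that fixes $\Sigma$ and deletes every letter of $\Sigma'$ then produces exactly $W/v$, which is therefore MCF.

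I do not expect a serious obstacle here. The only point that needs care is checking that each step of (ii) stays within the cone operations; equivalently one may simply invoke that MCFLs form a full AFL \cite{seki_multiple_1991}, hence are closed under quotient by regular languages, which subsumes the marker construction above. It is worth stressing that the hypothesis is on a finite \emph{subset}, not a subgroup: finiteness of $H$ is used only to keep the union defining $L_H$ finite. Alternatively the whole statement can be carried out inside the tree-stack formalism of Section 3, by modifying an automaton for $W$ so that from an accepting configuration it may additionally read one of the finitely many words $u_h^{-1}$ before accepting; but the closure-property argument is shorter.
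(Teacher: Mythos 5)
Your proof is correct, but it takes a genuinely different route from the paper. You reduce the statement to closure properties of the class of MCFLs: you write the language as a finite union over $h\in H$ of right quotients $W/u_h^{-1}$, prove closure under finite union by a disjoint-union grammar construction, and obtain closure under quotient by a fixed word from the cone operations (inverse homomorphism, intersection with a regular language, erasing homomorphism), or more directly from the fact that MCFLs form a full AFL and hence are closed under quotient by regular sets. All of these closure properties do hold by \cite{seki_multiple_1991}, and they even preserve the parameter $k$, so the argument is sound. The paper instead works entirely inside the tree-stack automaton formalism: it fixes for each $h$ a word $v_h$ representing $h^{-1}$, and builds a new automaton that nondeterministically guesses $h$ at the start and then simulates the run of the original automaton on $v_hw$ by carrying the unread suffix of $v_h$ as a second coordinate of the state (so it prepends $h^{-1}$ on the left, where you append it on the right; both characterizations of ``$w$ represents an element of $H$'' are of course equivalent). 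What the paper's constructive proof buys is twofold: it is manifestly $k$-restrictedness-preserving, which matters for the quantitative claim in the abstract; and, more importantly, the explicit automaton $\mathcal{M}_i'$ with states $Q_i\times F_i'$ is not merely an existence statement but a gadget that is spliced verbatim into the automaton for $G_1\ast_H G_2$ in Theorem \ref{thm: main result for amalgamation}, where the ``guess an element of $H$ and verify it while substituting the matching word in the other factor'' routine must run as a subroutine of a larger machine. Your closing remark sketches essentially this automaton alternative, so you have identified the paper's approach; had you developed it, you would need it anyway for the amalgamation theorem, whereas your closure-property argument, while shorter for the proposition in isolation, does not by itself produce the reusable construction.
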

\begin{proof}
	For each \(h \in H\), let $v_h$ be a word representing $h^{-1}$ in $\Sigma$. Let $R = \{v_h\mid h\in H\}$. Since \(H\) is a finite set, so is \(R\). Let $R'$ be the set of (possibly empty) suffixes of words in $R$. Let $\mathcal{M} = (Q, T, I, \delta)$ be an automaton recognising the word problem in $G$ with start state $q_I$ and a single final state $q_f$, where $T$ is the set of tree-stacks over the alphabet $\Omega$. Assume that this automaton has been modified as in Lemma \ref{lem:emptytree}.
	
	The idea is the following: let \(w\) be the input word.
	We will build an automaton that will ``guess'' an element of \(H\), say \(h\), and then proceed to process the word \(v_h w\) in \(\M\).
	The way this is done, is by adding a "second variable" to the states. The second variable represents the new word that is inserted.
	If the second variable is empty, then the automaton acts exactly as before. 
	Otherwise, if the automaton is in a state \((q, v)\), where \(v = a_1 \dots a_n\) is a (non trivial) word,
	the automaton acts as if it was in the state \(q\) and the first letter of \(v\) (that is, \(a_1\)) is read. Then the second variable
	becomes \(a_2 \dots a_n\). 
	
	More formally.
	We will build a new automaton $\mathcal{M}' = (Q', T', I', \delta')$ as follows. The set of states $Q'$ will be $(Q\times{R'})\sqcup\{S\}$. The storage $T'$ will be tree-stacks over $\Omega$. The set of transitions $\delta'$ will consist of four types of transformation:
	\begin{flalign*}
		&&(S, \epsilon, \texttt{equals}(\root), \text{Id}, (q_I, v)) &&&\forall v\in R,\\
		&&((q, v'), \epsilon, p, f, (q', v')) &&&\forall v'\in R' \,\mathrm{ and }\, (q, \epsilon, p, f, q')\in\delta,\\
		&&((q, a_1\dots a_n), \epsilon, p, f, (q', a_2\dots a_n)) &&&\forall a_1\dots a_n\in R' \,\mathrm{ and }\, (q, a_1, p, f, q')\in\delta,\\
		&&((q, \epsilon), \sigma, p, f, (q', \epsilon)) &&& \forall (q, \sigma, p, f, q')\in \delta.		
	\end{flalign*}
	The automaton will have start state $S$ and final state $(q_f, \epsilon)$.
\end{proof}

We stress once more that everything boils down to the fact that given an automaton \(\mathcal{M}\) and a finite number of words \(w_i \in \Sigma^\ast\), it is possible to insert a routine in the automaton that will mimic the behaviour of \(\mathcal{M}\) when a word \(w_i\) is read, that is, to "insert" \(w_i\) in the processed string of letters. The way it is done, is by adding the various suffixes of the \(w_i\) as a "second variable" to the states.

If $H$ is a normal subgroup $G$, then the word problem in $G/H$ is exactly the set of words representing elements of $H$. Thus we immediately get the following corollary.

\begin{corollary}
	If $G$ is a groups with multiple context-free word problem and $H$ is a finite normal subgroup of $G$, then $G/H$ has a multiple context-free word problem.
\end{corollary}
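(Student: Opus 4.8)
The plan is to deduce the corollary directly from Proposition~\ref{prop:finitesubgroup} by identifying the word problem of the quotient with the set of words representing elements of the (finite) kernel. Concretely, fix a finite generating set $\Sigma$ for $G$ and let $\pi\colon G\to G/H$ be the quotient homomorphism. The $\pi$-images of the elements of $\Sigma$ generate $G/H$, so I take (a copy of) $\Sigma$ as the generating set for $G/H$, and I compare the two word problems letter by letter: for $w\in\Sigma^*$, the element of $G/H$ represented by $w$ is precisely the $\pi$-image of the element of $G$ represented by $w$, since $\pi$ is a homomorphism sending generators to generators. Hence $w$ represents the identity in $G/H$ if and only if $w$ represents an element of $\ker\pi=H$ in $G$.

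Thus the word problem of $G/H$ with respect to this generating set is exactly the language $\{w\in\Sigma^*\mid w\text{ represents an element of }H\}$. Since $H$ is finite, Proposition~\ref{prop:finitesubgroup} applies and shows this language is multiple context-free, which is the assertion. The one auxiliary remark is that "having multiple context-free word problem'' is independent of the chosen finite generating set of a group; this is standard and follows from the fact recalled in the introduction that MCFLs form a cone (closed under inverse monoid homomorphisms, homomorphic images, and intersection with regular languages), so the conclusion holds for every finite generating set of $G/H$, not merely for the image of $\Sigma$.

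I do not expect any real obstacle here: all the work is contained in Proposition~\ref{prop:finitesubgroup}, and the passage from it to the corollary is the elementary observation that for $H$ normal in $G$ one has $\{\,w=1\text{ in }G/H\,\}=\{\,w\in H\text{ in }G\,\}$. The only things to be careful about are purely bookkeeping: matching up generating sets across $\pi$ and invoking generating-set-independence of the class.
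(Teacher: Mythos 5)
Your proposal is correct and is essentially identical to the paper's argument: the paper also observes that for $H$ normal the word problem of $G/H$ (over the image of a generating set of $G$) is precisely the set of words representing elements of $H$, and then invokes Proposition~\ref{prop:finitesubgroup}. Your added remark on generating-set independence is a harmless (and standard) extra precaution that the paper leaves implicit.
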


We recalled the following result from \cite{lyndon_combinatorial_2001}

\begin{theorem}[\cite{lyndon_combinatorial_2001} p.187, Theorem 2.6]\label{thm:normal_form_amalgams}
Let \(G = G_1 \ast_H G_2\) be an amalgamated product and let \(c_1, \dots, c_n\) be a sequence of elements of \(G\) such that:
\begin{enumerate}
\item \(n \geq 2\).
\item Each \(c_i\) is in one of the factors \(G_1\) or \(G_2\).
\item The words \(c_i\), \(c_{i+1}\) come form different factors.
\item No \(c_i\) is in \(H\).
\end{enumerate}
Then the product \(c_1 \dots c_n\) is non trivial in \(G\).
\end{theorem}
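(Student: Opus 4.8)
The plan is to use van der Waerden's trick: exhibit a set $W$ of formal normal forms on which $G$ acts by permutations, and show that $c_1\cdots c_n$ does not fix the trivial normal form. For $i\in\{1,2\}$ fix a set $T_i$ of representatives for the right cosets $H\backslash G_i$ with $1\in T_i$, so that every $g\in G_i$ is uniquely of the form $ht$ with $h\in H$ and $t\in T_i$. Let $W$ be the set of tuples $(h;t_1,\dots,t_k)$ with $k\ge 0$, $h\in H$, each $t_j\in(T_1\cup T_2)\smallsetminus\{1\}$, and $t_j,\,t_{j+1}$ lying in different transversals; one should picture $(h;t_1,\dots,t_k)$ as the word $ht_1\cdots t_k$, and $(1;)$ as the identity.

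I would then define an action of $G_1$ on $W$ by ``left multiplication and re-normalisation'' (and symmetrically for $G_2$): to act by $g\in G_1$ on $(h;t_1,\dots,t_k)$, if $k\ge 1$ and $t_1\in T_1$ then rewrite $ght_1=h't'$ with $h'\in H$, $t'\in T_1$, and output $(h';t_2,\dots,t_k)$ if $t'=1$ and $(h';t',t_2,\dots,t_k)$ otherwise; if instead $k=0$ or $t_1\in T_2$, rewrite $gh=h't'$ with $h'\in H$, $t'\in T_1$, and output $(h';t_1,\dots,t_k)$ if $t'=1$ and $(h';t',t_1,\dots,t_k)$ otherwise. The step demanding the most care is checking that this really is a left action, i.e.\ that $1$ acts trivially and $(gg')\cdot w=g\cdot(g'\cdot w)$; this is a short but slightly fiddly case analysis that rests entirely on the uniqueness of the decomposition $g=ht$ in $G_i$. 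An element $h_0$ of $H$ acts on both copies in the same way, namely by $(h;t_1,\dots,t_k)\mapsto(h_0h;t_1,\dots,t_k)$, so the two actions agree on $H$, and by the universal property of the amalgamated product they combine into an action of $G=G_1\ast_H G_2$ on $W$.

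Finally I would evaluate $c_1\cdots c_n$ on $(1;)$ from the right. Write $c_i=h_i t_{(i)}$ inside its factor; since $c_i\notin H$ we have $t_{(i)}\neq 1$. By downward induction on $i$ one shows that $c_ic_{i+1}\cdots c_n\cdot(1;)$ is a tuple of length $n-i+1$ whose leading letter lies in the transversal of the factor containing $c_i$: at the inductive step, because $c_i$ and $c_{i+1}$ lie in different factors the current leading letter lies in the transversal of the \emph{other} factor, so the action of $c_i$ falls into the prepending case, and the new leading letter is nontrivial since otherwise $c_i$ would be forced into $H$. Hence $c_1\cdots c_n\cdot(1;)$ has length $n\ge 2$, so it is not $(1;)$, and therefore $c_1\cdots c_n\neq 1$ in $G$. (Alternatively the statement follows from the action of $G$ on its Bass--Serre tree, but that requires building more machinery than the argument above.)
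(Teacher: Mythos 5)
Your argument is correct. The paper does not actually prove this statement---it is quoted as a classical result from Lyndon--Schupp (p.~187, Theorem~2.6)---and your van der Waerden permutation argument is precisely the standard proof of the Normal Form Theorem found in that reference: the action of each $G_i$ on alternating coset-representative tuples is well defined by uniqueness of the decomposition $g=ht$, the two actions agree on $H$ so glue via the universal property, and your downward induction correctly shows $c_1\cdots c_n$ moves the empty tuple, the key point being that a trivial new leading letter would force $c_i\in H$.
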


With Proposition \ref{prop:finitesubgroup} we can prove our main theorem, as previously stated the idea is similar to Theorem \ref{thm:free_product} with a few extra details. 

\begin{theorem}\label{thm: main result for amalgamation}
	Let $G_1$ and $G_2$ be groups whose word problem with multiple context-free. Let $H_i$ be a finite subgroup of $G_i$, such that $H_1\cong H_2\cong H$. Then $G = G_1\ast_{H}G_2$ has a multiple context-free word problem. 
\end{theorem}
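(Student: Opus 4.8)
The strategy mirrors the free-product construction in Theorem \ref{thm:free_product}, but with the twist that crossing between the two vertex automata is no longer ``free'': when we move from $\mathcal{M}_1$ to $\mathcal{M}_2$ we may be reading a letter of $\Sigma_2$ while what we actually left behind in $G_1$ was some coset representative of $H$. The key preparatory tool is Proposition \ref{prop:finitesubgroup}: for each vertex group $G_i$ it lets us build an automaton $\mathcal{M}_i'$ that recognises not just $W_i$ but, after ``guessing'' an element $h\in H$, the set of words representing that element $h$ of $H_i$. Concretely I would first enlarge each $\mathcal{M}_i$ so that, on entering, it nondeterministically guesses $h\in H$, processes the input word, and instead of requiring it to reach the identity it allows it to end having read a word equal to $h$ (using the ``second variable'' trick of Proposition \ref{prop:finitesubgroup} to prepend $v_h$, a fixed word for $h^{-1}$). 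The guessed $h$ should be recorded in the $\square$-symbol pushed onto the tree, so that the parent automaton can later cancel it against the matching $h^{-1}$ guessed by the sibling.

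\textbf{The automaton.} As in Theorem \ref{thm:free_product}, take $Q = Q_1' \sqcup Q_2' \sqcup \{S,F\}$ and a tree-stack alphabet $\Omega = \Omega_1 \sqcup \Omega_2 \sqcup \bigl(Q\times H \times \{\square_1,\square_2\}\bigr)$, so that each branching symbol remembers both the state we paused at and the element of $H$ that is being ``carried across'' the edge. The transitions $\delta_3$ governing the passage between factors are the analogues of those in Theorem \ref{thm:free_product}: from the start $S$ we may push $((S,1_H,\square_i))$ and enter $q_I^i$; from a final state $q_f^i$ sitting over a symbol $((q,h,\square_i))$ we pop down, but now the bookkeeping must record that the subword just processed represented the element $h\in H$, and the receiving automaton must have its own guessed element agree. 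To open a \emph{new} branch from inside $\mathcal{M}_i$ (the analogue of $\mathtt{push}_j((q,\square_{3-i}))$, with $j\in\{-1,\dots,-n\}$ to respect $k$-restrictedness via Lemma \ref{uniform}) we push $((q,h,\square_{3-i}))$ where $h$ is a fresh guess, enter $q_I^{3-i}$ with second variable $v_h$ prepended so that the sub-run of $\mathcal{M}_{3-i}$ will verify ``input $\cdot\,h^{-1} = 1$ in $G_{3-i}$'', i.e. that the syllable read equals $h$. The construction is again $k$-restricted because all $\mathtt{up}$ commands live inside $\delta_1'\cup\delta_2'$.

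\textbf{Correctness.} For $L(\mathcal{M})\subseteq W$ I would run the same colouring argument as in Theorem \ref{thm:free_product}: an accepted run yields a final tree $T_f$ whose non-root vertices are two-coloured by factor, there is a monochromatic subtree $T_c$ with connected complement, and the instructions acting entirely inside $T_c$ form a consecutive block $\Theta$ that is an accepted run of $\mathcal{M}_i'$ — hence the corresponding syllable $v$ equals the guessed $h\in H$ in $G_i$, i.e. $v$ lies in (a coset of) $H$ and in fact $v =_{G_i} h$. Peeling $\Theta$ off replaces $w = w_1 v w_2$ by $w_1 h w_2$ (a shorter word still equal to the same element of $G$, since $v$ and $h$ are the same group element and $h$ can be absorbed into a neighbouring syllable), and induction on the number of monochromatic subtrees closes the inclusion. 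For $W\subseteq L(\mathcal{M})$ I would induct on the syllable length of a reduced expression $c_1\cdots c_m$ with $c_i$ alternately in $G_1,G_2$ and $c_i\notin H$ (Theorem \ref{thm:normal_form_amalgams} guarantees such a reduced word is nontrivial, so a word in $W$ either has syllable length $\le 1$ after amalgamation or, after moving $H$-elements into neighbours, reduces to one that is trivial only because two adjacent syllables are $H$-conjugate and cancel); at the innermost syllable we open a new branch, guess the appropriate $h$, run $\mathcal{M}_i'$, return, and continue — using Lemma \ref{uniform} exactly as before to guarantee a $\mathtt{push}$ is available.

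\textbf{Main obstacle.} The delicate point is the interface between syllables: in the free-product case a crossing was bookkeeping-only, but here each crossing must simultaneously ``commit'' to an element $h\in H$ on the parent side and have the child automaton \emph{verify} that its processed syllable really equals that same $h$ — and when two adjacent syllables $c_i, c_{i+1}$ together lie in $H$ (the amalgamated-product phenomenon, absent for free products) the cancellation $c_i c_{i+1} \in H$ must be detected through the shared $\square$-label rather than at the root. Getting the guessed-element bookkeeping on the tree to line up consistently on both sides of every edge, and verifying that the reduction used in the induction (absorbing $H$-elements into neighbouring syllables, then cancelling conjugate pairs) is faithfully simulated, is where the real work lies; the rest is a routine, if tedious, adaptation of the proof of Theorem \ref{thm:free_product}.
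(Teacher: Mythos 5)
Your proposal is correct and follows essentially the same route as the paper: guess the element $h\in H$ represented by a syllable, record it (via a word in $F_i$) in the pushed tree symbol, verify the syllable equals $h$ using the second-variable trick of Proposition \ref{prop:finitesubgroup}, substitute it on return by a representative in the other factor's alphabet, and close both inclusions by the colouring argument and induction on free product length via Theorem \ref{thm:normal_form_amalgams}. The ``main obstacle'' you flag (adjacent syllables multiplying into $H$) is in fact absorbed by this induction, since substituting the $H$-syllable merges it with its neighbours and strictly shortens the syllable length.
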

\begin{proof}
	The idea is the following: suppose that the word \(w=a_1 \dots a_m\) is read. If all \(a_i\) are contained in only one of \(\Sigma_1\) or \(\Sigma_2\), 
	the automaton will then proceed as in Proposition \ref{prop:finitesubgroup} having guessed that it will read the trivial element. 
    So suppose this doesn't happen. 
	We can subdivide the word \(w\) into (maximal) subwords that contain only elements of \(\Sigma_1\) or \(\Sigma_2\). 
	This will give a sequence \(c_1, \dots , c_n\) of elements of \(G\). 
	Theorem \ref{thm:normal_form_amalgams} gives that \(w =_G c_1 \dots c_n\) represents the trivial element only if there is an
	\(i\) such that \(c_i\) represents an element of \(H\).  Let \(u\) be the subword of \(w\) associated to \(c_i\). Without loss of 
	generality, we may assume that \(u \in \Sigma_1^*\).
	By non-determinism, the automaton will guess the correct \(i\) and the element 	\(c_i \in H\). 
	Then, using the procedure detailed in Proposition \ref{prop:finitesubgroup}, it will  check if \(u\) really represents \(c_i\) and, if this is the case, the automaton will return to the point it started reading $u$ and proceed as if it had, instead, read the word \(v\in \Sigma^*_2\) representing \(c_i\) in \(G_2\).
	Note that for this last step it is crucial that \(H\) is finite. 
	
	It is clear that the word \(w\) will be accepted if and only if the automaton will accept the word obtained by \(w\) substituting 
	\(u\) with \(v\). 
	By induction on the length of the sequence \(c_1, \dots, c_n\), we get the result.

    	More formally:	
	Let $W_i$ be the word problem in $G_i$. Let $\mathcal{M}_i$ be an automaton accepting the language $W_i$. 
	Let $w_i^h$ be a word in $\Sigma_i$ representing the element $h\in H$. 
	Let $F_i = \{w_i^h\mid h\in H\}$ with a bijection $\phi\colon F_1\to F_2$ such that $\phi(w_1^h) = w_2^h$, let $\psi = \phi^{-1}$. Let $F_i'$ be the set of suffixes of words in $F_i$. Let $\mathcal{M}_i'$ be the automaton recognising words in $H_i$ from Proposition \ref{prop:finitesubgroup} with states $Q_i\times F_i'\sqcup \{S_i\}$. 
	
	Let $W$ be the word problem in $G$. We build an automaton similar to Theorem \ref{thm:free_product} accepting the language $W$.

	The states of $\mathcal{M}$ are $Q_1\times F_1'\sqcup Q_2\times F_2'\sqcup \{S_1, S_2, S, F\}$. The storage will be tree stacks over the alphabet $\Omega_1\sqcup\Omega_2\sqcup(Q_1\times F_1)\sqcup(Q_2\times F_2)\sqcup \{\square_1, \square_2\}$.
	
	The transitions will consist the following:
	
	\begin{enumerate}
		\item
			\(\left\{\left(S, \epsilon, C, \texttt{push}_1\left(\square_1\right), \left(q_I^1, \epsilon\right)\right), \left(S, \epsilon, C, \texttt{push}_1\left(\square_2\right), \left(q_I^2, \epsilon\right)\right)\right\}\cup\\	
			\left\{\left(S, \epsilon, \texttt{equals}(\root), \mathrm{Id}, F\right)\right\}\)
		\item  
			\(\left\{\left(\left(q_f^1, \epsilon\right), \epsilon, \texttt{equals}\left(\square_1\right), \texttt{down}, S\right), \left(\left(q_f^2, \epsilon\right), \epsilon, \texttt{equals}\left( \square_2\right), \texttt{down}, S\right)\right\}\)
		\item 
			\(	\big\{\left(\left(q, \epsilon\right), \epsilon, \mathtt{notequals}\left(Q_2 \times F_2\right), \mathtt{push}_i\left(\left(q, w\right)\right), \left(q_I^2, {\phi(w)}^{-1}\right) \right) \mid \\  q\in Q_1, w\in F_1, i\in \left\{-1, \dots, -n\right\}\big\}\) 
		\item 
			\(\big\{\left(\left(q, \epsilon\right), \epsilon, \mathtt{notequals}\left(Q_1 \times F_1\right), \mathtt{push}_i\left(\left(q, w\right)\right), \left(q_I^1, {\psi(w)}^{-1}\right)\right)\mid  \\  
			q\in Q_2, w\in F_2, i\in \left\{-1, \dots, -n\right\}\big\}\)
		\item 
			\(\left\{\left(\left(q_f^1, \epsilon\right), \epsilon, \mathtt{equals}\left(\left(q', w\right)\right), \mathtt{down}, \left(q', w\right)\right)\mid  q'\in Q_2, w\in F_2\right\}\cup \\
			\left\{\left(\left(q_f^2, \epsilon\right), \epsilon, \mathtt{equals}\left(\left(q', w\right)\right), \mathtt{down}, \left(q', w\right)\right)\mid  q'\in Q_1, w\in F_1\right\}\).
		\item 
			The transitions of $\mathcal{M}_i'$ except those with form \((S_i, \epsilon, \texttt{equals}(\root), \mathrm{Id}, (Q_I, v)).\)
	\end{enumerate}

	Before explaining in detail the rules, there is one key and central observation. If the automaton is in a state \((q, w)\) with \(w = a_1 \dots a_l \neq \epsilon\), then the only possible rules 	are those from group (6). In particular, by the definition of \(\mathcal{M}_i'\) (see the proof of Proposition \ref{prop:finitesubgroup}), the only such rules 	are of the form \(((q, a_1\dots a_l), \epsilon, p, f, (q', a_2\dots a_l))\), where \((q, a_1, p, f, q')\) was a rule of \(\mathcal{M}_i\) or \(((q, a_1\dots a_l), \epsilon, p, f, (q', a_1\dots a_l))\), where \((q, \epsilon, p, f, q')\) was a rule of \(\mathcal{M}_i\). That is, if there is a non empty word \(w\) at the second variable, the only possible rule that can be applied is one mimicking the behaviour of one of the original automata if the first letter of \(w\) was read. 
	That is, the priority is always to deplete the second variable of the states. 
	
	The elements of the group (1) consist of the very final instruction and the two instructions that starts processing letters in one of the two alphabets \(\Sigma_i\).
	
	The elements of the group (2) consist of the second to last move in a run, they are triggered when the complete word has been read and the tree-stack is one step away from the root.
	
	The elements of the groups (3) and (4) consists of the same type of rules, with the roles of \(G_1\) and \(G_2\) interchanged. 
	The rules describe the following instruction (say for the group (3)): "At any moment where the stack pointer is not pointing an element of \(Q_2 \times F_2\), and your state has empty second variable, you can guess that a sub-word that represents \(\phi(w)\) is starting, for some \(w \in H\). Then, you start a new branch and add \(\phi(w) ^{-1}\) at the second variable". If the guess was correct, then eventually the automaton will return to the root of the new branch with state \((q_2, \epsilon)\). Thus, it successfully 
	processed a sub-word that represented \(\phi(w)\). In this case, the rules of group (5) apply.
	Indeed, remember that, at the beginning of the process, we pushed \((q,w)\) in the stack, to remember the state at which the automaton was (as in Theorem \ref{thm:free_product}) and the word we were checking. Then, we put \(w\) in the second variable. What happened, is that we effectively substituted the sub-word representing \(\phi(w)\) with \(w\). 
	
	We will now give a precise proof of the Theorem. This automaton works similarly to the automaton in Theorem \ref{thm:free_product}. 
		
	Let $\Lambda$ be an accepted run for the automaton. Let $T_f$ be the final tree-stack for this run. We colour the non-root vertices of $T_f$ red and blue as in the proof of Theorem \ref{thm:free_product}. 
		
	There is a subtree $T_c\subset T_f$ of a single colour whose complement is connected. Assume that $T_c$ is a tree with labels from $\Omega_1$. For each instruction there are two possible pointers, these can be viewed as vertices of $T_f$. Let $\Theta$ be the subset of the instruction in $\Lambda$ such that both pointers are in $T_c$. It can be seen as in the proof of Theorem \ref{thm:free_product} that all these instructions are consecutive. Since $\Theta$ starts and ends at the root, the word read while performing the instructions in $\Theta$ represents an element $v\in F_1$. 
		
	The run $\Lambda$ decomposes as a concatenation $\Lambda_1\theta_1\Theta\theta_2\Xi\Lambda_2$, where $\theta_1$ and $\theta_2$ correspond to entering and leaving the tree $T_c$ and $\Xi$ is the run from $(q, \phi(v))$ to the first state $(q', \epsilon)$. 
			
	Since the tree $T_c$ cannot be reentered we see that $\Lambda$ is a valid run if and only if there is a valid run of the form $\Lambda_1\Theta'\Lambda_2$, where $\Theta'$ is the same run as $\Xi$ running through the states $(q, \epsilon)$ instead of $(q, w)$, one could see this as a run in $\mathcal{M}_2$ corresponding to $\Xi$. 
		
	The original decomposition $\Lambda_1\theta_1\Theta\theta_2\Xi\Lambda_2$ corresponds to a decomposition of $w$ as $u_1vu_2$. The word corresponding to the run $\Lambda_1\Theta'\Lambda_2$ is $u_1\phi(v)u_2$.
		
	It should be noted that the final tree for the run $\Lambda_1\Theta'\Lambda_2$ will have one fewer red subtree. 
		
	For the base case note that if $T_c = T_f$, then we have a word in $W_1\cup W_2$. Thus by induction on the number of maximal one-coloured subtrees, $L(\mathcal{M})$ is a subset of $W$. 	
	
	We must now prove that this automaton accepts all words in $W$. We will use the free product length of a word once again. Let $w = w_1\dots w_k$ be a word of free product length $k$. If this word represents the trivial word, then there is a subword $w_j$ which represents and element of $H$. Let $u$ be the corresponding element of $F_1$. We can assume this word is in $\Sigma_1^*$. Let $v$ be an element of $F_2$ representing the same element as $w_j$. 
	
	The automaton will leave the automaton $\mathcal{M}_2'$ from the state $(q, \epsilon)$ to the automaton $\mathcal{M}_1'$ starting at the state $(q_I^1, u)$. When the word $w_j$ is read the automaton will return to $\mathcal{M}_2'$ at the state $(q, v)$. The automaton will then make a run in $\mathcal{M}_2$ for the word $v$. Thus $w$ is in $L(\mathcal{M})$ if and only if $w' = w_1\dots w_{j-1}vw_{j+1}\dots w_k$ is in $L(\mathcal{M})$. Since $w'$ has shorter free product length and it is clear that words of free product length 1 are in $L(\mathcal{M})$, we are done by induction. 
\end{proof}

\section{HNN extensions and graphs of groups}
The goal of this section is to prove Theorem \ref{thm: main result for amalgamation} for HNN extension with finite associated subgroup. We recall the definition of HNN extension.
\begin{Def}[HNN extension]
	Let $G$ be a group, $H_1, H_2$ be two subgroups of \(G\)  and $\phi\colon H_1\to H_2$ be an isomorphism. The {\em HNN extension} is the group given by the presentation $G*_{\phi} = \langle G, t\mid tgt^{-1} = \phi(g)\mbox{ for all } g\in H_1\rangle$.
\end{Def}
Our goal is to prove the following result. 

\begin{theorem}\label{thm: main result for HNN extension}
 Let \(G\) be a finitely generated group whose word problem is multiple context-free. Let \(H_1\) and \(H_2\) be two finite subgroups of \(G\) and let \(\phi\colon H_1 \to H_2\) be an isomorphism. Then the HNN extension \(G \ast_{\phi}\) has a multiple context-free word problem.
\end{theorem}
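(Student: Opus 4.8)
The plan is to follow the proof of Theorem \ref{thm: main result for amalgamation} almost verbatim, with two changes: the normal form theorem for amalgams (Theorem \ref{thm:normal_form_amalgams}) is replaced by Britton's Lemma for HNN extensions (see \cite{lyndon_combinatorial_2001}), and the induction on free‑product length is replaced by an induction on the number of occurrences of $t^{\pm 1}$ in the input word, which I will call the \emph{$t$‑length}. Fix a finite generating set $\Sigma$ for $G$ and work over the alphabet $\Sigma\sqcup\{t,t^{-1}\}$. Recall Britton's Lemma: if $w=g_0t^{\epsilon_1}g_1\cdots t^{\epsilon_n}g_n$ with $g_i\in\Sigma^*$, $\epsilon_i\in\{\pm1\}$ and $n\geq 1$ represents the identity of $G\ast_\phi$, then for some $i$ with $1\leq i\leq n-1$ one has $\epsilon_{i+1}=-\epsilon_i$ and $g_i$ represents an element $h$ of $H_1$ if $\epsilon_i=+1$ (resp.\ of $H_2$ if $\epsilon_i=-1$); moreover $t^{\epsilon_i}g_it^{\epsilon_{i+1}}=_{G\ast_\phi}\phi^{\epsilon_i}(h)$, which lies in $G$ (here $\phi^{+1}=\phi$, $\phi^{-1}=\phi^{-1}$). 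For every $h\in H_1\cup H_2$ fix words $u_h,v_h\in\Sigma^*$ representing $h^{-1}$ and $h$, and let $R'$ be the finite set of all suffixes of the $u_h$ and the $v_h$.

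Next I would build the automaton. Let $\mathcal{M}=(Q,T,I,\delta)$ be a $k$‑restricted, cycle‑free automaton accepting the word problem $W_G$ of $G$, modified via Lemma \ref{lem:emptytree} so that it accepts only with the pointer at the root, and let $n$ be the bound of Lemma \ref{uniform} for $\mathcal{M}$; after relabelling we may assume $\mathcal{M}$ uses only finitely many, positive, tree labels. The new automaton $\mathcal{M}'$ has states $(Q\times R')\sqcup\{S,F\}$ as in Proposition \ref{prop:finitesubgroup}, and tree‑stack alphabet $\Omega\sqcup\bigl(Q\times(H_1\cup H_2)\times\{+,-\}\bigr)\sqcup\{\square\}$, the extra symbols serving as \emph{markers}. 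Its transitions are: (a) the transitions of $\mathcal{M}$, interpreted on states with empty second variable and with every predicate $\texttt{equals}(\root)$, $\texttt{notequals}(\root)$ replaced by the union over all marker symbols of $\texttt{equals}$, $\texttt{notequals}$, exactly as in Theorem \ref{thm:free_product}; (b) the second‑variable‑depletion transitions of Proposition \ref{prop:finitesubgroup}, which always have priority; (c) for each $q\in Q$, each sign $\epsilon$ and each $h$ in the appropriate $H_i$, a \emph{pinch‑opening} transition reading $t^{\epsilon}$, performing $\texttt{push}_{-m}((q,h,\epsilon))$ for $m\in\{1,\dots,n\}$, and moving to $(q_I,u_h)$ — so the fresh branch runs $\mathcal{M}$ on $u_h$ followed by the coming $\Sigma$‑subword, i.e.\ checks membership in $H_i$ as in Proposition \ref{prop:finitesubgroup}; (d) a \emph{pinch‑closing} transition from the accepting state of $\mathcal{M}$ sitting at a marker $(q,h,\epsilon)$, reading $t^{-\epsilon}$, performing \texttt{down}, and moving to $(q,v_{\phi^{\epsilon}(h)})$ — so the outer run of $\mathcal{M}$ resumes as if it had read a word representing $\phi^{\epsilon}(h)\in G$; together with the obvious $S/F$, $q_I/q_f$ initialisation and finalisation transitions of Theorems \ref{thm:free_product} and \ref{thm: main result for amalgamation}. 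The net effect of (c) together with (d) is to replace a pinch $t^{\epsilon_i}g_it^{\epsilon_{i+1}}$ by $v_{\phi^{\epsilon_i}(h)}$, provided the verification succeeds (i.e.\ $g_i=_Gh$).

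For correctness, the inclusion $L(\mathcal{M}')\subseteq W_{G\ast_\phi}$ is proved exactly as in Theorem \ref{thm: main result for amalgamation}: given an accepting run, two‑colour the vertices of the final tree‑stack at creation so that an innermost pinch‑check subtree $T_c$ is monochromatic with connected complement; the instructions with both pointers in $T_c$ are then consecutive, because no \texttt{up} move traverses the negatively‑labelled coordination edges, and they form an accepting $\mathcal{M}$‑run on $u_hg_i$, hence witness $g_i=_Gh$. Excising this block together with its bracketing pinch‑opening and pinch‑closing instructions yields an accepting run of the word $w'$ obtained from $w$ by the above replacement; since $w'=_{G\ast_\phi}w$ and $w'$ has strictly smaller $t$‑length, induction gives $w\in W_{G\ast_\phi}$, the base case $t$‑length $0$ being a pure $\mathcal{M}$‑run (so $w=_G1$, hence $w=_{G\ast_\phi}1$). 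For $W_{G\ast_\phi}\subseteq L(\mathcal{M}')$, induct on $t$‑length: a trivial word of positive $t$‑length has a pinch by Britton's Lemma, and replacing it as above gives a trivial word $w'$ of smaller $t$‑length which is accepted by induction; an accepting run of $w'$ is turned into one of $w$ by inserting the pinch‑opening detour, the membership check on $g_i$, and the pinch‑closing step, where — exactly as in Theorem \ref{thm:free_product} — the bound $n$ guarantees a free negatively‑labelled slot in which to push the coordination branch at the required moment. Finally, $\mathcal{M}'$ is $k$‑restricted: all \texttt{up} moves come from $\mathcal{M}$ and act along positive labels, so for positive $n$ the $k$‑bound is inherited from $\mathcal{M}$ (each vertex lies in a unique pinch region whose subtree is purely $\mathcal{M}$‑generated, since $\Sigma$‑subwords contain no $t$), while for negative $n$ each address is pushed at most once. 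Hence $L(\mathcal{M}')=W_{G\ast_\phi}$ is $k$‑multiple context‑free by \cite{denkinger_automata_2016}.

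The main obstacle is the same bookkeeping point as in the amalgamated case, only slightly sharper: here both the verification run and the resumed outer computation are copies of the \emph{one} automaton $\mathcal{M}$, so one must be careful that the $\root$‑to‑marker predicate substitution in (a) makes each copy of $\mathcal{M}$ behave correctly when its "root" is an interior marker vertex, and that the two‑colouring really isolates an innermost pinch‑check subtree (monochromatic, connected complement, impossible to re‑enter after the \texttt{down}). Verifying the $n$‑slot room for coordination pushes while preserving $k$‑restrictedness is the other delicate‑but‑routine ingredient. (Combined with Theorem \ref{thm: main result for amalgamation} and Bass–Serre theory this gives Theorem \ref{thm:graphofgroups}, since a finitely generated fundamental group of a finite graph of finite‑edge‑group groups is obtained from the vertex groups by finitely many amalgamations and HNN extensions over the edge groups, with all intermediate groups finitely generated.)
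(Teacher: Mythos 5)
Your proposal is correct and follows essentially the same route as the paper: the paper's own proof is a short sketch that defers to the amalgamated-product construction of Theorem \ref{thm: main result for amalgamation}, using the Britton-type lemma stated in Section 6 to locate a pinch $t^{\epsilon}gt^{-\epsilon}$, verifying $g\in H_i$ via the finite-subgroup mechanism of Proposition \ref{prop:finitesubgroup}, and resuming as if $\phi^{\pm1}(g)$ had been read. Your write-up simply fills in the tree-stack bookkeeping (markers, negative labels, the bound $n$, $k$-restrictedness) at the level of detail the paper gives for Theorem \ref{thm: main result for amalgamation}.
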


The proof of Theorem \ref{thm: main result for HNN extension} almost conicides with the proof in the case of the amalgamated product, modulo the following lemma.

\begin{lemma}
	Consider a word $g_0t^{\epsilon_1}g_1t^{\epsilon_2}\dots g_n$ in an HNN extension where $g_i\in G$ and $\epsilon_i = \pm 1$. If $w =1 $, then
	\begin{itemize}
		\item either $n=0$ and $g_0 = 1$ in $G$;
		\item or $n>0$ and for some $i\in\{1, \dots, n-1\}$ one of the following holds:
		\begin{enumerate}
			\item $\epsilon_i = 1$ and $\epsilon_{i+1} = -1$ and $g_i\in H_1$;
			\item $\epsilon_i = -1$ and $\epsilon_{i+1} = 1$ and $g_i\in H_2$.
		\end{enumerate}
	\end{itemize} 
\end{lemma}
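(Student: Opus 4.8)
The plan is to deduce this statement directly from Britton's Lemma, which is the HNN-extension analogue of Theorem \ref{thm:normal_form_amalgams} for amalgamated products. Britton's Lemma states that if $w = g_0 t^{\epsilon_1} g_1 t^{\epsilon_2} \cdots t^{\epsilon_n} g_n$ with $n \geq 1$ and $w =_G 1$, then $w$ contains a \emph{pinch}, i.e.\ a subword of the form $t^{-1} g_i t$ with $g_i \in H_2$ or $t g_i t^{-1}$ with $g_i \in H_1$. I would either cite this from \cite{lyndon_combinatorial_2001} (it appears in the same chapter as Theorem \ref{thm:normal_form_amalgams}) or sketch the short derivation from the normal form theorem for HNN extensions.

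Concretely, first I would handle the case $n = 0$: then $w = g_0$ and $w =_G 1$ forces $g_0 = 1$ in $G$, which is the first bullet. For $n > 0$, I would invoke Britton's Lemma to obtain an index $i$ where a pinch occurs. A pinch $t^{\epsilon_i} g_i t^{\epsilon_{i+1}}$ requires $\epsilon_i = -\epsilon_{i+1}$ (the two $t$-exponents flanking $g_i$ have opposite signs), and moreover: if $\epsilon_i = 1, \epsilon_{i+1} = -1$ then the relation $t g t^{-1} = \phi(g)$ is applicable only when $g_i \in H_1$; if $\epsilon_i = -1, \epsilon_{i+1} = 1$ then $t^{-1} g t = \phi^{-1}(g)$ requires $g_i \in H_2$. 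These are exactly the two subcases in the statement. The only point needing a remark is that the index satisfies $i \in \{1, \dots, n-1\}$: the pinch sits strictly between two $t$-letters, and since there are $n$ occurrences of $t^{\pm1}$ (indexed $1$ through $n$), the $g_i$ caught in a pinch has $1 \le i \le n-1$.

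The main (and essentially only) obstacle is purely expository: deciding whether to cite Britton's Lemma as a black box or to include its statement and a one-line justification. There is no real mathematical difficulty here — the lemma is a verbatim specialization of a classical normal-form result, exactly parallel to how Theorem \ref{thm:normal_form_amalgams} was used in the proof of Theorem \ref{thm: main result for amalgamation}. I would therefore keep the proof to a few sentences: state Britton's Lemma, split on $n = 0$ versus $n > 0$, and read off the sign and membership conditions from the definition of a pinch.
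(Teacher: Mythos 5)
Your proposal is correct: the statement is precisely Britton's Lemma for $G\ast_{\phi}$, and your reading-off of the conditions (a pinch $t g_i t^{-1}$ forces $g_i\in H_1$ since the relation is $tgt^{-1}=\phi(g)$ for $g\in H_1$, a pinch $t^{-1} g_i t$ forces $g_i\in H_2$, and the pinched $g_i$ is flanked by two stable letters so $1\le i\le n-1$) matches the paper's conventions exactly. The paper in fact states this lemma without any proof, implicitly treating it as the classical result from \cite{lyndon_combinatorial_2001} parallel to Theorem \ref{thm:normal_form_amalgams}, so citing Britton's Lemma as you do is precisely the intended argument.
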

\begin{proof}[Proof of Theorem \ref{thm: main result for HNN extension}]
	The proof here is the similar to the proof of Theorem \ref{thm: main result for amalgamation}. Instead of changing automaton when we change alphabet we instead note that each time we read a $t$ or $t^{-1}$ the next word we read must be an element $g$ in $H_1$ or $H_2$ respectively. Since \(H_i\) are finite groups, we can recognise such words. After doing this we return to where we were and proceed with the instruction as if we had read $\phi(g)$ or $\phi^{-1}(g)$ respectively.
\end{proof}

We have now all the ingredients to prove Theorem \ref{thm:graphofgroups}:
\begin{thmm}
 Let $G$ be the fundamental group of a finite graph of groups. Assume that all the vertex groups have multiple context-free word problem and all the edges groups are finite. Then $G$ has multiple context-free word problem. 
\end{thmm}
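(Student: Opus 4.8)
The plan is to reduce Theorem~\ref{thm:graphofgroups} to the building blocks already established: Theorem~\ref{thm:free_product} (closure under free products), Theorem~\ref{thm: main result for amalgamation} (closure under amalgamation over finite subgroups) and Theorem~\ref{thm: main result for HNN extension} (closure under HNN extensions with finite associated subgroups). The standard structure theory of graphs of groups says that the fundamental group of a finite graph of groups is built from the vertex groups by iterating amalgamated free products along a spanning tree of the underlying graph, followed by one HNN extension for each edge not in the spanning tree. So the proof is essentially an induction on the first Betti number of the underlying graph and then on the number of vertices.

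First I would fix a spanning tree $T$ of the finite underlying graph $Y$. If $Y = T$, i.e. the graph has no loops, then $G$ is obtained from the vertex groups by finitely many successive amalgamations: pick a leaf edge $e$ of $T$ with endpoints $u,v$; collapsing all of $T \setminus \{e\}$, the fundamental group is $G' \ast_{G_e} G_v$ (or, if $v$ is a valence-one vertex, $G' \ast_{G_e} G_v$ with $G'$ the fundamental group of the smaller graph of groups obtained by deleting $v$). Since $G_e$ is finite, Theorem~\ref{thm: main result for amalgamation} applies provided $G'$ already has multiple context-free word problem, which holds by the inductive hypothesis on the number of edges of $T$; the base case of a single vertex is just the hypothesis that each vertex group has multiple context-free word problem. (When an edge of $T$ joins a vertex to itself this cannot happen since $T$ is a tree, so only genuine amalgamations occur in this phase.)

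Next, having handled the spanning tree, I would add back the finitely many remaining edges of $Y \setminus T$ one at a time. Adding a single such edge $e$ with (possibly equal) endpoints in the already-constructed group $G_0$ corresponds exactly to forming the HNN extension $G_0 \ast_{\phi}$ where the associated subgroups are the images of the finite edge group $G_e$ and $\phi$ is the edge isomorphism. By Theorem~\ref{thm: main result for HNN extension}, since $G_e$ is finite and $G_0$ has (by induction) multiple context-free word problem, so does $G_0 \ast_\phi$. Iterating over all edges of $Y \setminus T$ completes the construction and yields $G$, so $G$ has multiple context-free word problem.

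The only real subtlety — and the step I would be most careful about — is the bookkeeping of the graph-of-groups decomposition: one must make sure that the iterated amalgamation/HNN construction really produces the fundamental group $\pi_1$ of the given graph of groups (with the correct edge monomorphisms at each stage), rather than some quotient or a different group. This is a standard fact (see Serre or Lyndon--Schupp), so I would simply cite it; no new formal-language argument is needed here. I would also note that, as in the earlier sections, all the edge groups being finite is precisely what makes Theorems~\ref{thm: main result for amalgamation} and \ref{thm: main result for HNN extension} applicable, and that the hypothesis ``finite graph of groups'' guarantees the induction terminates. A final remark worth making is the quantitative refinement: tracking the $k$-restrictedness through the construction shows that if every vertex group has $k$-MCF word problem then so does $G$, paralleling the corollary after Theorem~\ref{thm:free_product}.
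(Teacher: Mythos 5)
Your proposal is correct and follows exactly the same route as the paper: choose a spanning tree, build $\pi_1$ of the tree part by iterated application of Theorem~\ref{thm: main result for amalgamation} (finite edge groups), then add the remaining edges one at a time via Theorem~\ref{thm: main result for HNN extension}. The extra care you take with the bookkeeping of the decomposition and the remark on tracking $k$-restrictedness are welcome but do not change the argument.
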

\begin{proof}
Let \(\mathcal{T}\) be a spanning tree in the graph of the graph of groups. Applying inductively Theorem \ref{thm: main result for amalgamation}, we obtain that \(\pi_1(\mathcal{T})\) has a multiple context-free word problem. 
Since adding an edge between two vertices of a graph of groups corresponds to an HNN extension, by iteratively applying Theorem \ref{thm: main result for HNN extension} we obtain the result.
\end{proof}

\end{document}